\documentclass[11pt]{article}

\usepackage[T1]{fontenc}
\usepackage[utf8]{inputenc}
\usepackage{lmodern}
\usepackage{amsmath,amssymb,amsthm,mathtools}
\usepackage[margin=1in]{geometry}
\usepackage{bm}
\usepackage{microtype}
\usepackage{booktabs,tabularx,array}
\usepackage{enumitem}
\usepackage{algorithm2e}
\usepackage{needspace,etoolbox}
\usepackage{xcolor}
\usepackage[numbers,sort&compress]{natbib}
\usepackage[hidelinks]{hyperref}
\usepackage{bookmark}

\IfFormatAtLeastTF{2026-06-01}{}{\usepackage{aliascnt}}
\usepackage[nameinlink,capitalize,noabbrev]{cleveref}

\newcommand{\ResearchAgentSystem}{our laboratory's internal auto-research system}

\newif\ifanonymous
\anonymousfalse

\newcommand{\AuthorOne}{Haihan Zhang}
\newcommand{\AuthorTwo}{Wendao Wu}
\newcommand{\AuthorThree}{Chenheng Zhang}
\newcommand{\AuthorFour}{Haoxuan Li}
\newcommand{\AuthorFive}{Zhouchen Lin}
\newcommand{\AuthorSix}{Cong Fang}
\newcommand{\AuthorSeven}{Yanyi Li}
\newcommand{\AuthorEight}{Chunyuan Zheng}
\newcommand{\AffiliationOne}{Peking University}

\newcommand{\EmailOne}{zhanghaihan@stu.pku.edu.cn}
\newcommand{\EmailTwo}{wuwendao@stu.pku.edu.cn}
\newcommand{\EmailThree}{chenhengz@stu.pku.edu.cn}
\newcommand{\EmailFour}{hxli@pku.edu.cn}
\newcommand{\EmailFive}{ZLIN@pku.edu.cn}
\newcommand{\EmailSix}{fangcong@pku.edu.cn}
\newcommand{\EmailSeven}{liyanyi26@stu.pku.edu.cn}
\newcommand{\EmailEight}{cyzheng@stu.pku.edu.cn}

\ifanonymous
  \newcommand{\PDFAuthors}{Anonymous Authors}
\else
  \newcommand{\PDFAuthors}{\AuthorOne; \AuthorTwo; \AuthorThree; \AuthorFour; \AuthorFive}
\fi

\hypersetup{
  pdftitle={Near-Optimal Exact-Value Zeroth-Order Complexity for Smooth Strongly Convex Optimization},
  pdfauthor={\PDFAuthors},
  pdfsubject={Exact-value zeroth-order oracle complexity, smooth strongly convex optimization, finite differences, accelerated restart, and fixed-objective lower bounds},
  pdfkeywords={zeroth-order optimization, exact function values, smooth strongly convex optimization, oracle complexity, condition number, upper bound, lower bound}
}

\allowdisplaybreaks[1]

\setlist{leftmargin=*,itemsep=2pt,topsep=3pt}
\numberwithin{equation}{section}

\newcommand{\R}{\mathbb R}
\newcommand{\E}{\mathbb E}
\newcommand{\Prob}{\mathbb P}

\newcommand{\ip}[2]{\langle #1,#2\rangle}
\newcommand{\norm}[1]{\lVert #1\rVert}
\newcommand{\dist}{\operatorname{dist}}
\newcommand{\Lip}{\operatorname{Lip}}

\newcommand{\argminop}{\operatorname*{arg\,min}}

\newcommand{\Span}{\operatorname{span}}
\newcommand{\cF}{\mathcal F}
\newcommand{\cG}{\mathcal G}

\newcommand{\cC}{\mathcal C}

\newcommand{\1}{\bm 1}

\newcommand{\logp}[1]{\log_{+}\!\left(#1\right)}
\newcolumntype{P}[1]{>{\raggedright\arraybackslash}p{#1}}
\newcolumntype{Y}{>{\raggedright\arraybackslash}X}

\newtheorem{theorem}{Theorem}[section]

\IfFormatAtLeastTF{2026-06-01}{%
  \newtheorem{lemma}[theorem]{Lemma}
  \newtheorem{proposition}[theorem]{Proposition}
  \newtheorem{corollary}[theorem]{Corollary}
  \newtheorem{claim}[theorem]{Claim}
  \theoremstyle{definition}
  \newtheorem{definition}[theorem]{Definition}
  \newtheorem{remark}[theorem]{Remark}
  \newtheorem{example}[theorem]{Example}
}{%
  \newaliascnt{lemma}{theorem}
  \newtheorem{lemma}[lemma]{Lemma}
  \aliascntresetthe{lemma}
  \newaliascnt{proposition}{theorem}
  \newtheorem{proposition}[proposition]{Proposition}
  \aliascntresetthe{proposition}
  \newaliascnt{corollary}{theorem}
  \newtheorem{corollary}[corollary]{Corollary}
  \aliascntresetthe{corollary}
  \newaliascnt{claim}{theorem}
  
  \aliascntresetthe{claim}
  \theoremstyle{definition}
  \newaliascnt{definition}{theorem}
  \newtheorem{definition}[definition]{Definition}
  \aliascntresetthe{definition}
  \newaliascnt{remark}{theorem}
  \newtheorem{remark}[remark]{Remark}
  \aliascntresetthe{remark}
  \newaliascnt{example}{theorem}
  
  \aliascntresetthe{example}
}

\AtBeginEnvironment{theorem}{\Needspace{5\baselineskip}}
\AtBeginEnvironment{lemma}{\Needspace{5\baselineskip}}
\AtBeginEnvironment{proposition}{\Needspace{5\baselineskip}}
\AtBeginEnvironment{corollary}{\Needspace{5\baselineskip}}
\AtBeginEnvironment{claim}{\Needspace{5\baselineskip}}
\AtBeginEnvironment{definition}{\Needspace{5\baselineskip}}
\AtBeginEnvironment{remark}{\Needspace{4\baselineskip}}
\AtBeginEnvironment{example}{\Needspace{4\baselineskip}}
\AtBeginEnvironment{proof}{\Needspace{3\baselineskip}}

\title{\textbf{Near-Optimal Exact-Value Zeroth-Order Complexity for\\
Smooth Strongly Convex Optimization}}

\ifanonymous
  \author{Anonymous Authors}
\else
  \author{%
    \AuthorTwo$^{1,*}$ \quad
    \AuthorOne$^{1,*}$ \quad
    \AuthorThree$^{1,*}$\\[0.20em]
    \AuthorSeven$^{1}$ \quad
    \AuthorEight$^{1}$\\[0.20em]
    \AuthorSix$^{1,\dagger}$ \quad
    \AuthorFour$^{1,\dagger}$ \quad
    \AuthorFive$^{1,\dagger}$\\[0.60em]
    \small $^{1}$\AffiliationOne\\[0.35em]
    \small
    \href{mailto:\EmailOne}{\texttt{\EmailOne}} \quad
    \href{mailto:\EmailTwo}{\texttt{\EmailTwo}} \quad
    \href{mailto:\EmailThree}{\texttt{\EmailThree}}\\[-0.05em]
    \small
    \href{mailto:\EmailOne}{\texttt{\EmailSeven}} \quad
    \href{mailto:\EmailTwo}{\texttt{\EmailEight}} \\[-0.05em]
    \small
    \href{mailto:\EmailSix}{\texttt{\EmailSix}} \quad
    \href{mailto:\EmailFour}{\texttt{\EmailFour}} \quad
    \href{mailto:\EmailFive}{\texttt{\EmailFive}}\\[0.35em]
    \small $^{*}$Equal contribution.    \small $^{\dagger}$Corresponding authors.
  }
\fi
\date{}

\begin{document}
\maketitle

\begin{abstract}
We study deterministic adaptive optimization of globally
$\beta$-smooth, $\mu$-strongly convex functions using exact scalar
function values. Queries and outputs lie in $B_2^d(R)$, and the
minimizer lies in $B_2^d(R/2)$.
Set $\kappa=\beta/\mu$, $Q=\beta R^2/\epsilon$, and
$D_d=(d/\log(ed))^{1/3}$.
For sufficiently large $d$ and
$0<\epsilon\le c_\epsilon\beta R^2$, the minimax value complexity
$N_\epsilon$ satisfies
\[
\begin{aligned}
N_\epsilon
&\ge c d\min\{\sqrt Q,\sqrt\kappa,D_d\},\\
N_\epsilon
&\le C d\min\left\{
\sqrt Q,\sqrt\kappa[1+\log_+(Q/\kappa)]
\right\},
\end{aligned}
\]
where $c,C,c_\epsilon>0$ are universal constants and
$\log_+(t)=\max\{0,\log t\}$.
The lower bound uses an exactly shielded smooth chain and batched
delayed rotations; the upper bound combines finite differences,
acceleration, and restart.
When $\min\{Q,\kappa\}\le D_d^2$, these bounds match up to constants
in the accuracy-dominated regime $Q\le\kappa$ and at constant
relative accuracy $\epsilon=\Theta(\mu R^2)$.
For arbitrarily higher accuracy in the range $\kappa\le D_d^2$,
the bounds differ by at most $1+\log(\mu R^2/\epsilon)$;
the optimal accuracy dependence remains unresolved in general.
\end{abstract}

\noindent\textbf{Keywords:} exact-value zeroth-order oracle; smooth strongly convex optimization; matching upper and lower bounds;  derivative-free optimization.

\noindent\textbf{AI Usage.}
Nearly the entire research pipeline for this paper was carried out by
\ResearchAgentSystem{}, powered by GPT-5.6 Sol. The system also conducted a
Lean-backed article audit of the resulting manuscript. The authors subsequently reviewed and approved the
mathematical claims, presentation, and formal artifacts, and take
responsibility for the final manuscript.The complete Lean audit report and the system's technical report will be made public at a later date.

\section{Introduction}
\label{sec:introduction}
Zeroth-order optimization asks an algorithm to minimize an unknown
objective using only values of the function it queries.  In this paper the
feedback is exact: a query at $x$ returns the real number $f(x)$, and no
stochastic noise or finite-precision truncation is imposed.  We focus on
the smooth strongly convex case.  The objective belongs to
$\cF_{\mu,\beta}(d,R)$, meaning that it is globally $\beta$-smooth and
$\mu$-strongly convex, the condition number is
\[
  \kappa=\frac{\beta}{\mu},
\]
and the minimizer is promised to lie in $B_2^d(R/2)$ while all queries and
the output must remain in the public ball $B_2^d(R)$.  The goal is to
return $\widehat x$ with
\[
  f(\widehat x)-f^\star\le\epsilon.
\]
The normalized accuracy scale is
\[
  Q=\frac{\beta R^2}{\epsilon}.
\]
Thus $Q$ measures the smooth-convex difficulty, while $\kappa$ measures
the acceleration scale created by strong convexity.

At first sight, exact values make the upper-bound side almost formal.
With $d+1$ carefully chosen coordinate evaluations, one can approximate a
full gradient to arbitrarily high deterministic accuracy.  Substituting
this approximation into accelerated first-order methods suggests the value
complexity
\[
  d\sqrt\kappa
\]
per constant-factor reduction in the error, and the smooth-convex
alternative $d\sqrt Q$ when the requested accuracy is not yet in the
strongly convex regime.  This heuristic is correct, but turning it into a
bounded-query theorem requires more than citing a first-order method: every
finite-difference point must remain feasible, the gradient error must be
tracked through acceleration and projection, and the restart schedule must
avoid an unnecessary $\log\kappa$ burn-in loss.

\paragraph{The exact-value lower-bound frontier.}
The lower-bound side is substantially less automatic.  Since one exact
scalar value may contain infinitely many bits, finite-alphabet or
bit-counting arguments alone cannot establish the desired lower bound.
A resisting oracle must
also be globally consistent with one fixed objective after all adaptive
queries are chosen.  In nonsmooth exact-value optimization, recent work
has shown that exact scalar feedback can still be information-limited in
high dimension, but smooth strongly convex objectives require a different
construction: generic smoothing leaks hidden directions through exact
values, while strong convexity ties the terminal error to the location of
the minimizer.  The open question addressed here is therefore not merely
whether finite differences give a good algorithm, but whether the
accelerated value scale is unavoidable for deterministic adaptive methods
under the same bounded-query model.

\paragraph{Our answer.}
We identify the accelerated value complexity up to a single
relative-accuracy logarithm in a precisely specified high-dimensional
region.  Let
\[
  D_d=\left(\frac{d}{\log(ed)}\right)^{1/3}.
\]
For sufficiently large $d$ and $Q$, our main upper and lower bounds give
\begin{equation}
\boxed{
  c d\min\{\sqrt Q,\sqrt\kappa,D_d\}
  \le
  N_{\epsilon,\mathrm{det-ad}}^{\mathrm{sc,val}}(d,R,\mu,\beta)
  \le
  C d\min\left\{
    \sqrt Q,
    \sqrt\kappa\left[1+\logp{Q/\kappa}\right]
  \right\}.}
\label{eq:intro-main-sandwich}
\end{equation}
Consequently, if
\begin{equation}
  \min\{Q,\kappa\}\le D_d^2,
  \label{eq:intro-certified-region}
\end{equation}
then the only remaining gap is the relative-accuracy logarithm
$1+\logp{Q/\kappa}$.  Two important special cases follow immediately.
If $Q\le\kappa$ and $Q\le D_d^2$, the value complexity is exactly
\[
  \Theta(d\sqrt Q)
  \equiv
  \Theta\!\left(d\sqrt{\frac{\beta R^2}{\epsilon}}\right)
\]
up to universal constants.  If $\kappa\le Q$ and
$\kappa\le D_d^2$, then
\[
  N_{\epsilon,\mathrm{det-ad}}^{\mathrm{sc,val}}
  =\widetilde\Theta(d\sqrt\kappa),
\]
where the tilde hides only a logarithm in the relative accuracy
$\mu R^2/\epsilon$.  Thus the result extends the familiar
constant-relative-accuracy $d\sqrt\kappa$ law to arbitrary high accuracy
in this condition-number range.  At constant relative accuracy,
$\epsilon=\Theta(\mu R^2)$, the bounds reduce to
$\Theta(d\sqrt\kappa)$, with no remaining logarithmic gap.

\paragraph{Why the result is significant.}
The theorem clarifies the role of strong convexity in exact-value
zeroth-order optimization.  Strong convexity does not remove the leading
factor $d$ needed to recover directional information from scalar values,
but it replaces the smooth-convex accuracy scale $\sqrt Q$ by the
condition-number scale $\sqrt\kappa$ once the target accuracy enters the
strongly convex regime.  The lower bound shows that this transition is not
an artifact of coordinate finite differences or of a particular
accelerated method.  It is forced by a fixed smooth strongly convex
objective whose exact scalar values remain locally shielded from the
algorithm until enough directions have been learned.

\subsection{Contributions}
The paper makes five main contributions.
\begin{enumerate}[label=\textup{(C\arabic*)}]
\item \textbf{A deterministic bounded-query upper bound.}
We establish the upper bound in \eqref{eq:intro-main-sandwich} with a
self-contained analysis that counts every exact scalar value call.
Feasible finite differences, robust projected acceleration, a single
burn-in phase, and certified restarts yield the relative-accuracy logarithm
without an additional $\log\kappa$ term.

\item \textbf{A fixed-parameter smooth strongly convex lower bound.}
We prove the lower bound in \eqref{eq:intro-main-sandwich} against all
deterministic adaptive bounded-query algorithms.  For each algorithm, the
hard instance is a single fixed $C^{1,1}$ objective.  The calibration is
stated directly in the public parameters $(\mu,\beta,\epsilon)$, rather
than through an implicit chain length.

\item \textbf{An exact shielding mechanism for smooth value oracles.}
The lower bound uses a Moreau-smoothed biased max chain whose prefix is
exactly shielded.  This prevents exact function values from leaking
future hidden directions while preserving global $C^{1,1}$ smoothness.

\item \textbf{A delayed-rotation compiler with one fixed objective.}
Hidden directions are chosen in blocks after the algorithm's queries are
fixed, but the transcript is later realized by a single deterministic
objective.  A sentinel direction and a hidden-shift quadratic enforce the
interior minimizer promise and the exact strong-convexity modulus.

\item \textbf{A precise near-optimality landscape.}
We identify the certified region \eqref{eq:intro-certified-region}, the
exact accuracy-dominated and constant-relative-accuracy regimes, and the
high-accuracy $\widetilde\Theta(d\sqrt\kappa)$ branch.  The remaining
logarithmic and dimensional gaps are kept explicit.
\end{enumerate}

\subsection{Technical overview}
The upper bound starts from the elementary fact that exact coordinate
finite differences can simulate a gradient with arbitrary deterministic
accuracy.  The main issue is making this simulation compatible with the
bounded domain.  We therefore use forward or one-sided differences whose
query points stay in $B_2^d(R)$ and prove an inexact first-order oracle
inequality.  This feeds into an accelerated projected method robust to
summable deterministic gradient errors.  Running the method without strong
convexity gives the direct branch $O(d\sqrt Q)$.

For the strongly convex branch, one accelerated phase first enters a
region where the function gap is $O(\mu R^2)$.  Restarted accelerated
phases then reduce the certified error geometrically.  Each phase costs
$O(d\sqrt\kappa)$ value calls, and the number of phases is
$O(1+\log(\mu R^2/\epsilon))$.  Since the burn-in and restart analysis are
performed in the same value-oracle model, this yields the upper bound in
\cref{thm:upper-main} without an extra $\log\kappa$ term.

The lower bound is organized around a chain length $m$.  A biased max
chain creates a terminal gap of order $\beta_0R^2/m^2$, while its Moreau
envelope gives the required smoothness.  The key identity is exact prefix
shielding: within a shielded region, values depend only on the already
revealed prefix of the hidden orthogonal frame.  Batched delayed rotations
then force roughly $d$ scalar queries for each new chain coordinate, so
fewer than $\Omega(dm)$ calls cannot reveal the final direction.  The
compiler produces a fixed function with strong-convexity modulus
$\beta_0/(4096m^2)$.  Choosing $m$ as the largest admissible integer below
\[
  \min\{\sqrt Q,\sqrt\kappa,D_d\}
\]
calibrates the construction to the desired public parameters and proves
\cref{thm:lower-main}.

\subsection{Scope and organization}
The results are information-complexity statements.  Exact arithmetic and
internal computation are free, but every scalar value query is charged and
must lie in the public ball.  The lower bound covers fully adaptive
deterministic algorithms, but not randomized algorithms or algorithms
allowed to query outside $B_2^d(R)$.  A matching relative-accuracy
logarithmic lower bound and near-optimality beyond
$\min\{Q,\kappa\}\le D_d^2$ remain open.  The hard objectives are globally
$C^{1,1}$; higher smoothness remains outside the present proof.

\Cref{sec:related} compares the oracle conventions and rate statements in
the relevant literature.  \Cref{sec:model} states the formal model and the
main theorems.  \Cref{sec:upper} proves the finite-difference upper
bounds.  \Cref{sec:lower-architecture}--\cref{sec:admissibility} build
the shielded value chain and delayed-rotation compiler.  \Cref{sec:calibration}
calibrates the hard instance to $(\mu,\beta,\epsilon)$, and
\cref{sec:landscape} summarizes the resulting complexity landscape.

\section{Related work and rate comparison}
\label{sec:related}
\paragraph{Oracle convention.}
The distinction between exact deterministic values, noisy values, and
first-order gradients is central.  In the present model, a query returns a
single real number $f(x)$ with no stochastic error and no finite-precision
truncation.  The algorithm may adapt to the entire transcript, but every
query must remain in the bounded feasible ball.  This convention is the
one used simultaneously in the upper-bound simulation and in the
fixed-objective lower bound.

\paragraph{Classical value-oracle complexity.}
Information-based complexity for convex optimization from function values
goes back to the deterministic oracle framework of \citet{nemirovski1983problem}
and related value-only methods such as \citet{protasov1996algorithms}.
Modern zeroth-order optimization developed sharp stochastic and randomized
rates for convex classes; see, for example,
\citet{shamir2013complexity,duchi2015optimal,shamir2017optimal,nesterov2017random}.
For smooth convex objectives, accelerated randomized gradient-free methods
achieve the value scale corresponding to $d\sqrt Q$ calls
\citep{nesterov2017random}.  Our direct upper proof is deterministic and
keeps all finite-difference evaluations inside the same bounded query set
used by the lower bound.

\paragraph{Strong convexity and finite-difference acceleration.}
The $\sqrt\kappa$ iteration scale for smooth strongly convex first-order
optimization is classical \citep{nesterov2004introductory,nesterov2018lectures}.
Because exact values allow arbitrarily accurate coordinate finite
differences, this first-order theory strongly suggests the upper rate
$d\sqrt\kappa$ per constant-factor error reduction.  Robust accelerated
methods under additive gradient errors have been analyzed in first-order
and derivative-free settings \citep{vasin2023accelerated,gorbunov2022accelerated}.
The proof here isolates the deterministic bounded-query version needed
for the exact-value complexity theorem.

\paragraph{Lower bounds from fixed objectives.}
Resisting rotations and chain constructions are standard tools for oracle
lower bounds \citep{carmon2020lower,diakonikolas2019parallel}.  Exact
scalar feedback introduces an additional consistency constraint: all
answers must be values of one fixed function, not merely responses from an
adaptive adversary.  Recent exact-value lower bounds for nonsmooth
Lipschitz classes obtain near-quadratic dimension dependence
\citep{kerger2026closing,zhang2026nearoptimal}.  The present construction
addresses the smooth strongly convex regime by combining a Moreau-smoothed
biased max chain with exact prefix shielding.

To compare the rates, set
\[
  Q=\frac{\beta R^2}{\epsilon},
  \qquad
  \kappa=\frac{\beta}{\mu},
  \qquad
  D_d=\left(\frac{d}{\log(ed)}\right)^{1/3}.
\]
The table suppresses universal constants and, where indicated, logarithmic
factors.

\begin{table}[t]
\centering
\caption{Representative rates and scopes for value-oracle and related
smooth strongly convex optimization results.  Rows with different oracle
models are not directly comparable.}
\label{tab:related-rates}
\footnotesize
\setlength{\tabcolsep}{4.0pt}
\renewcommand{\arraystretch}{1.18}
\begin{tabularx}{\linewidth}{@{}P{1.95cm}P{1.90cm}P{3.10cm}Y P{1.55cm}@{}}
\toprule
Setting & Oracle / criterion & Representative complexity & Scope and limitation & Reference \\
\midrule
Smooth convex optimization
& exact or randomized values; function gap
& $O(d\sqrt Q)$ upper scale
& No strong-convexity acceleration branch; randomized analyses often allow stochastic or two-point estimators.
& \citep{nesterov2017random,duchi2015optimal,shamir2017optimal} \\
\addlinespace
Smooth strongly convex optimization
& first-order gradients; function gap
& $O(\sqrt\kappa\log(1/\epsilon))$ gradient calls
& Does not count the $d$ scalar evaluations needed to recover one gradient from exact values.
& \citep{nesterov2004introductory,nesterov2018lectures} \\
\addlinespace
Smooth strongly convex optimization
& deterministic exact values; upper bound
& $\begin{gathered}O(d\min\{\sqrt Q,\\\sqrt\kappa[1+\logp{Q/\kappa}]\})\end{gathered}$
& Feasible coordinate finite differences, robust acceleration, burn-in, and restart; every value query is charged.
& This work \\
\addlinespace
Smooth strongly convex optimization
& deterministic exact values; lower bound
& $\begin{gathered}\Omega(d\min\{\sqrt Q,\\\sqrt\kappa,D_d\})\end{gathered}$
& Fixed $C^{1,1}$ objective; deterministic adaptive algorithms; bounded queries; cubic dimension threshold.
& This work \\
\addlinespace
Nonsmooth exact-value convex optimization
& deterministic exact values; function gap
& near-quadratic dimension lower-bound phenomena
& Different regularity class; shows why exact scalar feedback still admits information-theoretic lower bounds.
& \citep{kerger2026closing,zhang2026nearoptimal} \\
\bottomrule
\end{tabularx}
\end{table}

The comparison highlights the contribution of the present theorem.  The
upper bound confirms that exact values can simulate accelerated gradients
with only the unavoidable leading factor $d$.  The lower bound shows that,
in the certified high-dimensional region, no deterministic adaptive
bounded-query algorithm can remove this $d\sqrt{\min\{Q,\kappa\}}$ core.
The remaining mismatch is sharply localized: a logarithm in relative
accuracy, the threshold $D_d$, and the deterministic bounded-query scope.

\section{Problem formulation and main results}
\label{sec:model}

Throughout, $d\in\mathbb N$, $R>0$, and $0<\mu\le\beta$.  Let
\[
X:=B_2^d(R),
\qquad
\cC:=B_2^d(R/2),
\qquad
\kappa:=\frac{\beta}{\mu}.
\]
We use the standard convention
\[
\log_+(u):=\max\{0,\log u\},
\qquad u>0.
\]

\begin{definition}[Auxiliary smooth convex class]
\label{def:convex-class}
For $L>0$, let $\cG_L(d,R)$ be the set of all functions $f:\R^d\to\R$ such that
\begin{enumerate}[label=(\roman*)]
\item $f$ is convex and continuously differentiable;
\item $\nabla f$ is globally $L$-Lipschitz in Euclidean norm;
\item $f$ has a unique global minimizer $x_f^\star\in\cC$.
\end{enumerate}
\end{definition}

\begin{definition}[Smooth strongly convex class]
\label{def:sc-class}
Let $\cF_{\mu,\beta}(d,R)$ be the set of all continuously differentiable functions $f:\R^d\to\R$ satisfying
\begin{align}
f(y)&\ge f(x)+\ip{\nabla f(x)}{y-x}+\frac\mu2\norm{y-x}_2^2,
\label{eq:sc}\\
\norm{\nabla f(x)-\nabla f(y)}_2&\le\beta\norm{x-y}_2
\label{eq:smooth}
\end{align}
for all $x,y\in\R^d$, and whose unique minimizer $x_f^\star$ lies in $\cC$.  Write $f^\star=f(x_f^\star)$.
\end{definition}

A deterministic adaptive algorithm with budget $T$ consists of Borel maps
\[
Q_t:(X\times\R)^{t-1}\to X,
\qquad t=1,\ldots,T,
\]
and a Borel terminal map
\[
Q_{\mathrm{out}}:(X\times\R)^T\to X.
\]
Recursively,
\[
x_t=Q_t((x_1,f(x_1)),\ldots,(x_{t-1},f(x_{t-1}))),
\]
and
\[
\widehat x=Q_{\mathrm{out}}((x_1,f(x_1)),\ldots,(x_T,f(x_T))).
\]
Repeated calls are charged unless a stored value is reused.  Internal exact-real computation is free, and the objective is fixed throughout the interaction.

\begin{definition}[Minimax exact-value complexity]
For $\epsilon>0$, define
\[
N_{\epsilon,\mathrm{det-ad}}^{\mathrm{sc,val}}(d,R,\mu,\beta)
:=
\inf\left\{
T\in\mathbb N_0:
\begin{array}{l}
\text{there exists a deterministic adaptive $T$-call algorithm}\\[1mm]
\text{such that }\displaystyle\sup_{f\in\cF_{\mu,\beta}(d,R)}
[f(\widehat x_f)-f^\star]\le\epsilon
\end{array}
\right\}.
\]
As usual, $\inf\varnothing=+\infty$.
\end{definition}

Set
\[
Q:=\frac{\beta R^2}{\epsilon},
\qquad
D_d:=\left(\frac{d}{\log(ed)}\right)^{1/3}.
\]

\begin{theorem}[Combined deterministic upper bound]
\label{thm:upper-main}
There is a universal constant $C>0$ such that, for every $d\in\mathbb N$, $R>0$, $0<\mu\le\beta$, and $0<\epsilon\le\beta R^2$,
\begin{equation}
\label{eq:upper-main}
N_{\epsilon,\mathrm{det-ad}}^{\mathrm{sc,val}}(d,R,\mu,\beta)
\le
C d\min\left\{
\sqrt Q,
\sqrt\kappa\left[1+\logp{Q/\kappa}\right]
\right\}.
\end{equation}
Every query made by the constructed algorithms lies in $X$, and their outputs lie in $\cC$.
\end{theorem}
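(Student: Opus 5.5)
We prove \cref{thm:upper-main} by building two algorithms and running whichever has the smaller budget. Both use a single primitive, a \emph{feasible finite-difference gradient oracle}. Fix a point $y\in\cC=B_2^d(R/2)$ and a step $0<h\le R/2$. Then every point $y\pm he_i$ lies in $X$. We form the forward differences
\[
g_i=\frac{f(y+he_i)-f(y)}{h},\qquad i=1,\ldots,d .
\]
This costs $d+1$ value calls. By $\beta$-smoothness, $|g_i-\partial_if(y)|\le\beta h/2$, so $\norm{g-\nabla f(y)}_2\le\beta h\sqrt d/2$. Because values are exact, $h$ is a free parameter. We may therefore make the gradient error $\delta_k$ at iteration $k$ as small as we like, for instance $\delta_k\le \epsilon/(R\,2^{k+4})$, without changing the query count. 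This is the point where exactness is used.

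\textbf{Robust projected acceleration.} The main technical step is an inexact-gradient version of Nesterov's accelerated projected gradient method (FISTA-type, with Euclidean projection onto $\cC$), run on $\cC$. We keep all iterates, extrapolation points, and the output in $\cC$ by projecting. Since $x_f^\star\in\cC$, the constrained and unconstrained minimizers coincide. We would prove that with gradient errors $\delta_k$,
\[
f(x_k)-f^\star\le\frac{2\beta\norm{x_0-x^\star}^2}{(k+1)^2}+\frac{C}{(k+1)^2}\sum_{j\le k}(j+1)\,\delta_j\,(R+\ldots).
\]
The proof is the standard estimate-sequence/Lyapunov argument in which the inexact gradient enters linearly through $\ip{g-\nabla f}{z-x^\star}$, with $\norm{z-x^\star}\le R$ because everything lies in $\cC$. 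I expect this bookkeeping to be the main obstacle: the error must be absorbed without an unbounded-iterate argument, and projection keeps the argument clean. With summable $\delta_j$ chosen as above, the error term is at most $\epsilon/2$. Taking $k\asymp\sqrt{\beta R^2/\epsilon}=\sqrt Q$ iterations then gives the direct branch with $O(d\sqrt Q)$ calls; the assumption $\epsilon\le\beta R^2$ ensures $\sqrt Q\ge1$, so constants absorb the $+1$.

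\textbf{Burn-in and restart.} For the second branch, first suppose $Q\le\kappa$. Then $\sqrt Q\le\sqrt\kappa$ and the direct branch already suffices. Otherwise, run one burn-in phase of the method above with target accuracy $\epsilon_0=\mu R^2$, at cost $O(d\sqrt{\beta/\mu})=O(d\sqrt\kappa)$. After it, $f(x)-f^\star\le\mu R^2$. Strong convexity then gives $\norm{x-x^\star}^2\le 2(f(x)-f^\star)/\mu$.

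Each restart phase starts from the previous output with error $e$. Running $K=\lceil 4\sqrt{\kappa}\rceil$ iterations yields
\[
f-f^\star\le \frac{2\beta\cdot 2e/\mu}{K^2}+\frac e8\le \frac e2,
\]
with the inexactness budget set to $e/8$ in that phase. Each phase costs $O(d\sqrt\kappa)$ calls. After $\lceil\log_2(\mu R^2/\epsilon)\rceil=O(1+\log_+(Q/\kappa))$ phases the error is at most $\epsilon$. The total cost is $O(d\sqrt\kappa[1+\log_+(Q/\kappa)])$.

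Running the cheaper branch gives the claimed minimum. By construction, every query lies in $X$ and every output lies in $\cC$, which completes the plan.
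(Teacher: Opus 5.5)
Your proposal is correct and follows essentially the paper's route. The shared steps are:
\begin{itemize}
\item feasible forward differences at points of $\cC$, so every query stays in $X$;
\item an error-robust projected accelerated method, analyzed by an estimate-sequence argument on the bounded set $\cC$;
\item the direct $O(d\sqrt Q)$ branch;
\item a single $O(d\sqrt\kappa)$ burn-in to a gap of order $\mu R^2$, which is what removes the $\log\kappa$ term;
\item halving restarts certified by $\norm{x-x^\star}_2^2\le 2\Delta/\mu$.
\end{itemize}
Your iteration-dependent tolerances and your sharper $(j+1)\delta_j$-weighted error accumulation are only cosmetic departures. The paper instead uses one tolerance per phase and the cruder bound $(N+1)R\delta_g+N\delta_g^2/(2\beta)$ with $\overline\beta=2\beta$.

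One point needs to be stated precisely. The gradient must be estimated at the convex combination $y_{k+1}=(A_kx_k+\alpha_{k+1}z_k)/A_{k+1}$ of two points of $\cC$, as in the paper's three-sequence scheme. A Beck--Teboulle extrapolation $x_k+\theta_k(x_k-x_{k-1})$ can leave $X$. Projecting that point back onto $\cC$ is not covered by the standard FISTA analysis.
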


The lower proof is organized around the following auxiliary compiler.

\begin{theorem}[Exactly shielded hard-instance compiler]
\label{thm:compiler}
There exist universal constants $c_0>0$ and $d_0\in\mathbb N$ such that the following holds.  Let $d\ge d_0$, $R,\beta_0>0$, and let $m$ be an integer satisfying
\begin{equation}
\label{eq:m-conditions}
1\le m\le\frac d4,
\qquad
m^3\log(ed)\le c_0d.
\end{equation}
For every deterministic adaptive algorithm making fewer than $dm/8$ exact-value calls in $X$, there exists one fixed function $F\in\cG_{\beta_0}(d,R)$ such that
\begin{equation}
\label{eq:integer-gap}
F(\widehat x)-F^\star
\ge
2^{-17}\frac{\beta_0R^2}{m^2}.
\end{equation}
Moreover, the largest strong-convexity modulus of $F$ is exactly
\begin{equation}
\label{eq:compiler-mu}
\mu(F)=\frac{\beta_0}{4096m^2}.
\end{equation}
\end{theorem}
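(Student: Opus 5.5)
The plan is to build $F$ in three layers: a biased max chain on a hidden orthonormal frame, a Moreau envelope that makes it smooth, and a quadratic term that supplies the exact strong-convexity modulus. Fix scale parameters and a hidden orthonormal frame $u_0,u_1,\ldots,u_m$, where $u_0$ is a sentinel direction. The nonsmooth chain is
\[
h(x)=\max_{1\le i\le m}\Bigl(\tfrac{a}{\sqrt m}\ip{u_i}{x}\cdot s-b\,i\Bigr)
\]
in the style of a biased max of linear functions, with biases $b\,i$ chosen so that the active index can only advance one coordinate at a time. We take its Moreau envelope $h_\lambda$ with $\lambda$ tuned so that $\nabla h_\lambda$ is $\beta_0/2$-Lipschitz. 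We then add
\[
\frac{\mu_0}{2}\norm{x-z}^2,\qquad \mu_0=\beta_0/(4096m^2),
\]
where the center $z$ is a hidden shift along the sentinel and chain directions. This gives $F=h_\lambda+\frac{\mu_0}{2}\norm{\cdot-z}^2$. Since the envelope of a convex function is convex and $C^{1,1}$, the modulus is at least $\mu_0$. To get it \emph{exactly}, we verify that $h_\lambda$ is affine, and hence has zero curvature, along some direction in a region: the orthogonal complement of the frame is a flat direction of $h_\lambda$. Global smoothness $\beta_0$ then follows from $\beta_0/2+\mu_0\le\beta_0$. Finally, we check that the unique minimizer lies in $B_2^d(R/4)\subset\cC$ by an explicit computation of the stationary point. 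The sentinel component and the shift $z$ are chosen to cancel the chain's pull, so that $\norm{x^\star}\le R/4$.

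The second step is the shielding lemma. Define $S_k=\{x\in X:\ \text{the indices }>k\text{ are strictly inactive with margin at }x\}$. The claim is that on $S_k$, $F(x)$ depends only on $u_0,\ldots,u_k$ and on the projection onto their span, together with $\norm{x}$. For the nonsmooth max this is immediate. For the Moreau envelope we argue as follows. If the proximal point of $x$ keeps indices $>k$ inactive, which is guaranteed by a margin of size $\gtrsim\lambda\cdot$(Lipschitz constant), then $h_\lambda(x)$ coincides with the envelope of the truncated max over $i\le k$. The quadratic term also only uses revealed directions together with $\norm{x}^2$, provided $z$ lies in the revealed span up to a term that is constant on the relevant set. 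We handle this by placing the $z$-components along $u_0$ and along a fixed public combination.

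The third step is the adversary, and it is where I expect the real difficulty to lie. We run the algorithm in batches of $d/8$ queries. Within batch $k$, all queries are answered using the truncated function $F_k$, which depends only on $u_0,\ldots,u_k$. At the end of the batch, $u_{k+1}$ is chosen uniformly on the sphere of the orthogonal complement of all previous frame vectors and all queries so far. With probability $\ge 1-d^{-2}$ (by concentration), every query of batch $k+1$ has $|\ip{u_{k+1}}{x}|\lesssim R\sqrt{\log(ed)/d}$. Here, though, $u_{k+1}$ is chosen \emph{after} batch $k$ but before batch $k+1$, while batch $k+1$ queries may correlate with it. The fix is a delayed rotation: the direction is chosen orthogonal to the queries already made, and the future queries are controlled by requiring them to lie in $S_{k+1}$ for a later-chosen $u_{k+2}$. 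Concretely, we show inductively that every query lies in $S_k$ relative to the fully chosen frame. The margin needed is $b\gtrsim aR\sqrt{\log(ed)/(dm)}$, accumulated over at most $m$ steps. Requiring that the chain still produce the terminal gap $\beta_0R^2/m^2$ leads to exactly the condition $m^3\log(ed)\le c_0 d$. Since the $u_i$ need orthogonality to at most $dm/8+m\le d/2$ vectors, $m\le d/4$ suffices. A union bound over $m$ batches gives a positive-probability (hence existing) frame. Because the algorithm is deterministic, the transcript under $F_k$ equals the transcript under $F$, so a single fixed $F$ is realized.

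The last step is the terminal gap. With fewer than $dm/8$ calls, at most $m-1$ batches complete, so $u_m$ is independent of the output $\widehat x$ in the above sense and $|\ip{u_m}{\widehat x}|$ is small. Any point with a small $u_m$-component has $F(\widehat x)-F^\star\ge$ (value of the truncated chain minus the full optimum). The standard max-chain computation gives $\gtrsim a^2 R^2/m^2$ for this difference, with a smoothing loss of $O(\lambda a^2)$. Choosing $\lambda$ so that this loss is at most a quarter of the gap, and tracking constants, yields $2^{-17}\beta_0R^2/m^2$.
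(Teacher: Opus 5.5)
Your overall architecture (biased max chain, Moreau envelope, batched delayed rotation, a flat direction for the exact modulus) matches the paper's. However, the delayed-rotation step fails as written. You choose $u_{k+1}$ in the orthogonal complement of \emph{all} queries made so far, and you justify this by ``$dm/8+m\le d/2$''. That inequality holds only for $m<4$. Under the budget the algorithm makes $\Theta(dm)\gg d$ queries, so after roughly eight batches of size $d/8$ the complement of all past queries can be $\{0\}$, while the theorem needs $m$ as large as $(c_0d/\log(ed))^{1/3}$. Your concentration remark does not rescue this, because it is applied to the batch-$(k+1)$ queries, which may depend on $u_{k+1}$. The missing idea is the split the paper uses. $u_j$ is made \emph{exactly} orthogonal only to the current block, which has at most $\lfloor d/4\rfloor$ points; together with the earlier frame vectors this still leaves dimension $\ge d/2$. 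The new coordinate is therefore exactly $0$ on the queries that were answered without it. On every query from earlier blocks, which are fixed vectors at selection time, $u_j$ is only required to satisfy $|\ip{u_j}{x_t}|\le\rho$. Gaussian cap avoidance with a union bound over up to $d^2/16+1$ vectors delivers this when $d\rho^2/R^2\gtrsim\log(ed)$. With $\rho=R/(256m^{3/2})$, this is exactly where $m^3\log(ed)\le c_0d$ comes from. After the output, the first unrevealed direction must be exactly orthogonal to the incomplete block and to $\widehat x$ and small on everything else; the remaining directions must be small on everything.

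Two further steps break once projections are merely small rather than zero. First, batch-$k$ queries need not lie in your shielding set $S_k$ (all indices $>k$ strictly inactive). Take a query with $\ip{u_{k+1}}{x}=0$ whose revealed coordinates are negative by more than the accumulated bias, which is exactly the progress a descent method makes. There, piece $k+1$, equal to the public constant $-b(k+1)$, is the active one, so replying with the envelope of $\max_{i\le k}$ is inconsistent with $F$. The reply must be the full chain with the unrevealed coordinates set to zero. What must be proved is that indices $\ge k+2$ are dominated by the zeroed index $k+1$; this is the paper's identity \eqref{eq:shield-zero}, and your prox-point argument can be adapted to that corrected statement. Second, the hidden shift $z$ cannot be made consistent. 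The term $\frac{\mu_0}{2}\norm{x-z}_2^2$ adds $-\mu_0\ip{z}{x}$ to every reply, and nothing shields this linear term. Earlier queries have nonzero projections on not-yet-selected chain directions, so any component of $z$ along such a direction leaks it. On the other hand, a shift with no component in $\operatorname{range}(U)$ cannot cancel the chain's pull, which lies in $\operatorname{range}(U)$ and has norm at least $1/\sqrt m$ times the chain slope. At the slope the terminal gap requires, this pull displaces the minimizer by order $R$; with the paper's constants the displacement would be at least $2R$. The paper confines the minimizer instead with the frame-independent term $\frac{\beta_0}{8}\dist(x,B_2^d(R_0))^2$, which gives $\norm{x_U^\star}_2\le R_0+\rho/2<R/2$. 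It then reads off the exact modulus $\eta$ along a direction orthogonal to $\operatorname{range}(U)$ inside $B_2^d(R_0)$, where that term vanishes.
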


\begin{theorem}[Fixed-parameter lower bound]
\label{thm:lower-main}
There exist universal constants $c,c_\epsilon>0$ and $d_0\in\mathbb N$ such that, whenever
\[
d\ge d_0,
\qquad
R>0,
\qquad
0<\mu\le\beta,
\qquad
0<\epsilon\le c_\epsilon\beta R^2,
\]
one has
\begin{equation}
\label{eq:lower-main}
N_{\epsilon,\mathrm{det-ad}}^{\mathrm{sc,val}}(d,R,\mu,\beta)
\ge
c d\min\left\{
\sqrt Q,
\sqrt\kappa,
D_d
\right\}.
\end{equation}
\end{theorem}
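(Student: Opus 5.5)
We derive \cref{thm:lower-main} from the compiler \cref{thm:compiler} by calibration. Set $M:=\min\{\sqrt Q,\sqrt\kappa,D_d\}$ and choose $m$ to be the largest integer with $m\le c_1M$. Here $c_1\in(0,1)$ is a small universal constant fixed below. The hypothesis $\epsilon\le c_\epsilon\beta R^2$ gives $Q\ge 1/c_\epsilon$. Taking $d\ge d_0$ large makes $D_d$ large, and we may assume $\kappa\ge\kappa_0$; the small-$\kappa$ case is treated separately at the end. Under these conditions $c_1M\ge 1$, so $m\ge 1$ and $m\ge c_1M/2$. The conditions \eqref{eq:m-conditions} then hold. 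First, $m\le c_1D_d\le d/4$ for large $d$. Second, $m^3\log(ed)\le c_1^3 D_d^3\log(ed)=c_1^3d\le c_0d$ once $c_1^3\le c_0$.

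Next we match the public parameters. We need $F\in\cF_{\mu,\beta}(d,R)$ together with a gap exceeding $\epsilon$. The compiler outputs $F\in\cG_{\beta_0}(d,R)$ with smoothness $\beta_0$ and exact modulus $\beta_0/(4096m^2)$. Set $\beta_0:=\beta$, so $F$ is $\beta$-smooth. Strong convexity with modulus $\mu$ requires $\beta/(4096m^2)\ge\mu$, that is, $m^2\le\kappa/4096$. This holds if $c_1\le 1/64$, since then $m\le\sqrt\kappa/64$. Because the largest strong-convexity modulus is at least $\mu$, inequality \eqref{eq:sc} holds with $\mu$. The compiler also certifies a unique minimizer in $\cC$, so $F\in\cF_{\mu,\beta}(d,R)$. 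The gap bound \eqref{eq:integer-gap} gives $F(\widehat x)-F^\star\ge 2^{-17}\beta R^2/m^2$. Since $m^2\le c_1^2Q=c_1^2\beta R^2/\epsilon$, this is at least $2^{-17}\epsilon/c_1^2>\epsilon$ when $c_1<2^{-8.5}$.

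It follows that every algorithm with fewer than $dm/8$ calls fails on some member of the class, so $N_\epsilon\ge dm/8\ge (c_1/16)\,dM$. This is the claim \eqref{eq:lower-main} with $c=c_1/16$.

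The main obstacle is the boundary regime where $c_1M<1$, which forces $m=0$. Since $Q$ and $D_d$ are large by assumption, this can only happen when $\kappa$ is bounded by a constant. There we need a direct argument giving $N_\epsilon\ge c\,d$. For this I would use the trivial bound that any algorithm making fewer than about $d/2$ queries cannot locate the minimizer of a quadratic $\frac\mu2\|x-x^\star\|^2+\text{const}$. Here $\mu=\beta$ and the direction of $x^\star$ of norm $R/2$ is hidden orthogonally to the span of the queries. For a fixed function this requires a resisting-oracle argument: values depend only on the projection onto the queried span plus $\|x\|$. The resulting gap is of order $\mu R^2\gtrsim\epsilon/c_\epsilon$, which exceeds $\epsilon$. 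Alternatively, one can shrink $c$ so that the target bound becomes $\le 1$ whenever $\kappa<\kappa_0$ and $d$ is bounded; since $d$ is unbounded, however, the quadratic argument is the honest route, and it is the step I would check most carefully.
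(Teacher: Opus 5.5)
Your proposal is correct and follows the paper's proof: calibrate \cref{thm:compiler} with $m\asymp\min\{\sqrt Q,\sqrt\kappa,D_d\}$, and when that minimum is bounded, fall back on the hidden-shift quadratic resisting oracle (\cref{lem:quadratic}), which gives $N\ge d-1$. The differences are cosmetic: you take $\beta_0=\beta$ and use $m\le c_1\sqrt\kappa$ and $m\le c_1\sqrt Q$ separately for the modulus and the gap, whereas the paper sets $\beta_0=4096\lambda m^2$ with $\lambda=\max\{\mu,64\epsilon/R^2\}$; and you shrink $c_\epsilon$ (implicitly $c_\epsilon\lesssim c_1^2$, which also lets a curvature-$\mu$ quadratic beat $\epsilon$ when $\kappa<\kappa_0$, with no need for $\mu=\beta$) so that only bounded $\kappa$ reaches the quadratic case, while the paper's $\lambda$ handles every bounded-$M$ case at once.
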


\begin{theorem}[Near-optimal region]
\label{thm:near-region}
Under the assumptions of \cref{thm:lower-main}, suppose additionally that
\begin{equation}
\label{eq:near-condition}
\min\{Q,\kappa\}
\le
\left(\frac{d}{\log(ed)}\right)^{2/3}.
\end{equation}
Then universal constants $c,C>0$ satisfy
\begin{equation}
\label{eq:near-main}
c d\sqrt{\min\{Q,\kappa\}}
\le
N_{\epsilon,\mathrm{det-ad}}^{\mathrm{sc,val}}(d,R,\mu,\beta)
\le
C d\sqrt{\min\{Q,\kappa\}}
\left[1+\logp{Q/\kappa}\right].
\end{equation}
\end{theorem}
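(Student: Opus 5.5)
The plan is to derive \cref{thm:near-region} directly from \cref{thm:upper-main} and \cref{thm:lower-main}, with no new construction. Condition \eqref{eq:near-condition} says $\min\{Q,\kappa\}\le D_d^2$, which is exactly what is needed to remove $D_d$ from the lower bound.

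For the lower half, note that \eqref{eq:near-condition} gives $\sqrt{\min\{Q,\kappa\}}\le D_d$. Therefore
\[
\min\{\sqrt Q,\sqrt\kappa,D_d\}=\sqrt{\min\{Q,\kappa\}}.
\]
The assumptions of \cref{thm:lower-main} hold by hypothesis, so \eqref{eq:lower-main} becomes $N\ge c d\sqrt{\min\{Q,\kappa\}}$ with the same constant $c$.

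For the upper half, \cref{thm:lower-main} assumes $\epsilon\le c_\epsilon\beta R^2$. Taking $c_\epsilon\le 1$ without loss of generality, we get $\epsilon\le\beta R^2$, so \cref{thm:upper-main} applies. Split into two cases.
\begin{itemize}
\item If $Q\le\kappa$, keep the first term in \eqref{eq:upper-main}: $N\le Cd\sqrt Q=Cd\sqrt{\min\{Q,\kappa\}}$. This is at most the claimed bound, since $1+\logp{Q/\kappa}\ge 1$.
\item If $Q>\kappa$, keep the second term: $N\le Cd\sqrt\kappa\,[1+\logp{Q/\kappa}]$, and here $\sqrt\kappa=\sqrt{\min\{Q,\kappa\}}$.
\end{itemize}
In both cases \eqref{eq:near-main} holds with the constant $C$ from \cref{thm:upper-main}.

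There is no real obstacle; the argument is bookkeeping. The only points to check are that the two theorems' hypotheses on $\epsilon$ are compatible, which is handled by shrinking $c_\epsilon$ if necessary, and that the min identity is applied with the same $D_d$ as in \cref{thm:lower-main}.
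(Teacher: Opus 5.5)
Your proposal is correct and follows the paper's proof essentially verbatim. The condition turns the three-way minimum in the lower bound into $\sqrt{\min\{Q,\kappa\}}$, and the upper bound comes from splitting on $Q\le\kappa$ versus $Q>\kappa$ in \cref{thm:upper-main}. Your explicit check that the two $\epsilon$-hypotheses are compatible (the paper's proof of \cref{thm:lower-main} takes $c_\epsilon=1/64\le 1$) is a small detail the paper leaves implicit.
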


\begin{corollary}[Exact accuracy-dominated branch]
\label{cor:accuracy-branch}
If
\[
Q\le\kappa,
\qquad
Q\le\left(\frac{d}{\log(ed)}\right)^{2/3},
\]
then, subject to the universal thresholds of \cref{thm:lower-main},
\[
N_{\epsilon,\mathrm{det-ad}}^{\mathrm{sc,val}}(d,R,\mu,\beta)
=
\Theta\left(d\sqrt{\frac{\beta R^2}{\epsilon}}\right).
\]
\end{corollary}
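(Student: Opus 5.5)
The corollary follows by specializing \cref{thm:upper-main} and \cref{thm:lower-main} (equivalently \cref{thm:near-region}) to the regime $Q\le\kappa$, $Q\le D_d^2$. I take the universal thresholds of \cref{thm:lower-main} as standing assumptions: $d\ge d_0$ and $0<\epsilon\le c_\epsilon\beta R^2$. Then $\epsilon\le\beta R^2$ as well, at least after shrinking $c_\epsilon$ to be at most $1$ if needed, so \cref{thm:upper-main} also applies.

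\emph{Lower bound.} Since $Q\le\kappa$, we have $\sqrt Q\le\sqrt\kappa$. The hypothesis $Q\le (d/\log(ed))^{2/3}=D_d^2$ gives $\sqrt Q\le D_d$. Hence $\min\{\sqrt Q,\sqrt\kappa,D_d\}=\sqrt Q$, and \eqref{eq:lower-main} yields
\[
N_{\epsilon,\mathrm{det-ad}}^{\mathrm{sc,val}}\ge c\,d\sqrt Q=c\,d\sqrt{\beta R^2/\epsilon}.
\]

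\emph{Upper bound.} From \eqref{eq:upper-main}, the minimum is at most its first entry, so
\[
N_{\epsilon,\mathrm{det-ad}}^{\mathrm{sc,val}}\le C\,d\sqrt Q.
\]
Alternatively, $Q/\kappa\le1$ gives $\logp{Q/\kappa}=0$, which makes the logarithmic factor in \eqref{eq:near-main} trivial. Either way the upper bound is $C\,d\sqrt Q$.

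Combining the two bounds gives $\Theta(d\sqrt{\beta R^2/\epsilon})$ with universal constants. The only point needing care is that the hypotheses of both theorems hold simultaneously. This comes down to checking that $c_\epsilon\le1$, or replacing $c_\epsilon$ by $\min\{c_\epsilon,1\}$; there is no real obstacle.
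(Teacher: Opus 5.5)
Your proposal is correct and follows the paper's argument: the paper notes $\min\{Q,\kappa\}=Q$ and $\log_+(Q/\kappa)=0$ and invokes \cref{thm:near-region}, which itself just combines \cref{thm:lower-main} (with $\min\{\sqrt Q,\sqrt\kappa,D_d\}=\sqrt Q$) and the direct branch of \cref{thm:upper-main}, exactly as you do. Your compatibility check on $\epsilon$ is also automatic in the paper, since \cref{thm:lower-main} is proved with $c_\epsilon=1/64\le 1$.
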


\begin{corollary}[Full high-accuracy $d\sqrt\kappa$ region]
\label{cor:kappa-branch}
There exist universal constants $c,C,c_\epsilon>0$ and $d_0\in\mathbb N$ such that, whenever
\[
d\ge d_0,
\qquad
\kappa\le\left(\frac{d}{\log(ed)}\right)^{2/3},
\qquad
0<\epsilon\le c_\epsilon\mu R^2,
\]
one has
\begin{equation}
\label{eq:kappa-branch}
c d\sqrt\kappa
\le
N_{\epsilon,\mathrm{det-ad}}^{\mathrm{sc,val}}(d,R,\mu,\beta)
\le
C d\sqrt\kappa
\left[1+\log\frac{\mu R^2}{\epsilon}\right].
\end{equation}
Consequently,
\[
N_{\epsilon,\mathrm{det-ad}}^{\mathrm{sc,val}}
=
\widetilde\Theta(d\sqrt\kappa)
\]
throughout this entire high-accuracy region, where the suppressed factor is logarithmic only in $\mu R^2/\epsilon$.
\end{corollary}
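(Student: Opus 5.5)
Corollary~\ref{cor:kappa-branch} follows by specializing \cref{thm:upper-main} and \cref{thm:lower-main} to the regime $\epsilon\le c_\epsilon\mu R^2$; no new construction is needed, only bookkeeping of the thresholds. I will take the constant $c_\epsilon$ of the corollary to be the minimum of the constant from \cref{thm:lower-main} and $1$. Then $\epsilon\le c_\epsilon\mu R^2\le c_\epsilon\beta R^2$, so the hypotheses of both theorems hold. I also take $d_0$ to be the one from \cref{thm:lower-main}.

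\textbf{Lower bound.} The assumption $\epsilon\le c_\epsilon\mu R^2$ gives $Q=\beta R^2/\epsilon\ge \kappa/c_\epsilon\ge\kappa$, so $\min\{\sqrt Q,\sqrt\kappa\}=\sqrt\kappa$. The assumption $\kappa\le (d/\log(ed))^{2/3}=D_d^2$ gives $\sqrt\kappa\le D_d$. Hence the minimum in \eqref{eq:lower-main} equals $\sqrt\kappa$, and \cref{thm:lower-main} yields $N\ge c\,d\sqrt\kappa$.

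\textbf{Upper bound.} \Cref{thm:upper-main} gives $N\le C d\sqrt\kappa\,[1+\log_+(Q/\kappa)]$. Since $Q/\kappa=\mu R^2/\epsilon\ge 1/c_\epsilon\ge 1$, we have $\log_+(Q/\kappa)=\log(\mu R^2/\epsilon)$, which is exactly \eqref{eq:kappa-branch}. The $\widetilde\Theta$ statement is a direct rereading: the ratio between the two bounds is at most $(C/c)[1+\log(\mu R^2/\epsilon)]$, a factor depending only on the relative accuracy.

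The only delicate point is making the constants consistent, so that a single $c_\epsilon$ simultaneously satisfies the lower-theorem requirement $\epsilon\le c_\epsilon\beta R^2$ and forces $Q\ge\kappa$. Taking the minimum as above handles this, using $\mu\le\beta$.
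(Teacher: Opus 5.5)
Your proposal is correct and follows essentially the same route as the paper: the same choice of $c_\epsilon$ (no larger than the lower-bound constant and no larger than $1$), the same observations $Q\ge\kappa$ and $\sqrt\kappa\le D_d$, and the same identification $\logp{Q/\kappa}=\log(\mu R^2/\epsilon)$. The only difference is that the paper cites \cref{thm:near-region}, which packages \cref{thm:lower-main} and \cref{thm:upper-main}, whereas you apply those two theorems directly.
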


\begin{remark}[Exact versus near-optimal]
At constant relative accuracy, $\epsilon=\Theta(\mu R^2)$, \eqref{eq:kappa-branch} reduces to $\Theta(d\sqrt\kappa)$.  For arbitrarily smaller $\epsilon$, the current lower bound does not contain the logarithmic factor in \eqref{eq:kappa-branch}; the result is therefore near-optimal rather than an exact all-accuracy characterization.
\end{remark}

\begin{proposition}[The degenerate endpoint $\kappa=1$]
\label{prop:kappa-one}
If $\beta=\mu$, then every $f\in\cF_{\mu,\mu}(d,R)$ is a shifted isotropic quadratic.  Consequently, for every $\epsilon>0$,
\[
N_{\epsilon,\mathrm{det-ad}}^{\mathrm{sc,val}}(d,R,\mu,\mu)\le d+1.
\]
Moreover, for $0<\epsilon\le\mu R^2/64$ and $d\ge2$,
\[
d-1
\le
N_{\epsilon,\mathrm{det-ad}}^{\mathrm{sc,val}}(d,R,\mu,\mu)
\le d+1.
\]
\end{proposition}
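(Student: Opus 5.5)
First we show that $\beta=\mu$ forces a quadratic.  Let $f\in\cF_{\mu,\mu}(d,R)$ and put $g(x)=f(x)-\frac\mu2\norm{x}_2^2$.  By \eqref{eq:sc}, $g$ is convex.  By \eqref{eq:smooth}, $f$ is $\mu$-smooth, so $\frac\mu2\norm{x}^2-f(x)$ is also convex.  Hence $g$ is both convex and concave, i.e.\ affine: $g(x)=\ip{a}{x}+b$.  Completing the square gives
\[
f(x)=\frac\mu2\norm{x-x^\star}_2^2+f^\star ,
\]
with $x^\star=-a/\mu\in\cC$.

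For the upper bound $d+1$, query $0$ and $\pm\frac R2 e_i$... that uses $2d+1$ calls, so I will use $0$ and $re_i$ for $i=1,\ldots,d$, with $r=R$ (all in $X$).  Since $f(re_i)-f(0)=\frac\mu2(r^2-2rx^\star_i)$, each coordinate is $x^\star_i=\frac r2-\frac{f(re_i)-f(0)}{\mu r}$, where $\mu$ is a public parameter.  So $d+1$ exact values recover $x^\star$ exactly.  Outputting $\widehat x=x^\star\in\cC$ gives zero error for every $\epsilon>0$.

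For the lower bound $d-1$ when $d\ge2$ and $\epsilon\le\mu R^2/64$, I use a resisting-oracle argument against a deterministic algorithm with $T\le d-2$ calls.
\begin{enumerate}[label=(\roman*)]
\item Each answer is $f(x_t)=\frac\mu2\norm{x_t}^2-\mu\ip{x_t}{x^\star}+\frac\mu2\norm{x^\star}^2+f^\star$.  We may fix $f^\star=-\frac\mu2\norm{x^\star}^2$ so the constant term is zero.  Then answers depend on $x^\star$ only through the linear functionals $\ip{x_t}{x^\star}$.
\item The adversary answers as if $x^\star=0$, so $f(x_t)=\frac\mu2\norm{x_t}^2$.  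This fixes the query sequence $x_1,\ldots,x_T$ and the output $\widehat x$.
\item Let $V=\Span\{x_1,\ldots,x_T,\widehat x\}$; then $\dim V\le d-1<d$.  Choose a unit vector $u\perp V$ and let $x^\star=\pm\frac R2u\in\cC$, with $f^\star=-\frac\mu2\norm{x^\star}^2$.
\item Both choices are consistent with the transcript, because $\ip{x_t}{x^\star}=0$ for every query.  So the deterministic algorithm makes the same queries and returns the same $\widehat x$ on either instance.
\item Since $\widehat x\perp u$, its error is $\frac\mu2\norm{\widehat x-x^\star}^2\ge\frac\mu2\cdot\frac{R^2}{4}=\frac{\mu R^2}{8}>\epsilon$.
\end{enumerate}
Any $T\le d-2$ therefore fails, which gives $N\ge d-1$; in fact the argument even gives $N\ge d-1$ with slack.

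The quadratic characterization and the consistency check are routine.  The only care points are the exact normalization of $f^\star$ and keeping all queries inside $X$.
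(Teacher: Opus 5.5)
Your proof is correct and takes essentially the paper's approach: you recover the shifted isotropic quadratic from the $d+1$ values at $0$ and $Re_i$, and you prove the lower bound with the same hidden-shift resisting oracle as \cref{lem:quadratic} with $\lambda=\mu$. Your normalization $f^\star=-\frac\mu2\norm{x^\star}_2^2$ and radius $R/2$ (rather than $R/4$) are harmless because $\cC$ is closed, and the only other difference is cosmetic: you get the quadratic form from ``$f-\frac\mu2\norm{\cdot}_2^2$ is both convex and concave, hence affine'', whereas the paper uses a Cauchy--Schwarz equality argument to obtain $\nabla f(x)-\nabla f(y)=\mu(x-y)$.
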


\begin{proof}
Summing the two strong-convexity inequalities with $(x,y)$ interchanged gives
\[
\ip{\nabla f(x)-\nabla f(y)}{x-y}
\ge
\mu\norm{x-y}_2^2.
\]
On the other hand, smoothness and Cauchy--Schwarz give
\[
\ip{\nabla f(x)-\nabla f(y)}{x-y}
\le
\norm{\nabla f(x)-\nabla f(y)}_2\norm{x-y}_2
\le
\mu\norm{x-y}_2^2.
\]
Equality throughout implies
\[
\nabla f(x)-\nabla f(y)=\mu(x-y),
\]
so $f(x)=\frac\mu2\norm{x}_2^2+\ip b x+c$ for some $b\in\R^d$ and $c\in\R$.  Querying $f(0)$ and $f(he_i)$ for $i=1,\ldots,d$, with any fixed $0<h\le R$, recovers
\[
b_i=\frac{f(he_i)-f(0)}{h}-\frac{\mu h}{2}.
\]
The minimizer $-b/\mu$, promised to lie in $\cC$, can therefore be output exactly after $d+1$ calls.  The lower inequality follows from \cref{lem:quadratic} with $\lambda=\mu$.
\end{proof}

\section{Complete proof of the upper bounds}
\label{sec:upper}

All accelerated iterates are constrained to the inner ball
\[
\cC=B_2^d(R/2),
\]
which contains the promised minimizer and has diameter $R$.  A forward finite-difference displacement of length at most $R/2$ therefore remains in the allowed query ball $X=B_2^d(R)$.

\subsection{A deterministic finite-difference oracle}

\begin{lemma}[Forward differences]
\label{lem:fd}
Let $f\in\cG_\beta(d,R)$, $y\in\cC$, and $0<h\le R/2$.  Define
\begin{equation}
\label{eq:fd}
[g_h(y)]_i
:=
\frac{f(y+he_i)-f(y)}{h},
\qquad i=1,\ldots,d.
\end{equation}
Then all $d+1$ queried points belong to $X$, and
\begin{equation}
\label{eq:fd-error}
\norm{g_h(y)-\nabla f(y)}_2
\le\frac{\beta h\sqrt d}{2}.
\end{equation}
\end{lemma}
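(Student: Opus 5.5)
The plan has two parts: feasibility of the queried points, and a coordinatewise Taylor bound summed in Euclidean norm.

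\emph{Feasibility.} Since $y\in\cC=B_2^d(R/2)$ and $0<h\le R/2$, the triangle inequality gives
\[
\norm{y+he_i}_2\le\norm{y}_2+h\le R/2+R/2=R
\]
for each $i$. The base point $y$ itself lies in $\cC\subset X$. Hence all $d+1$ points $y,\,y+he_1,\ldots,y+he_d$ belong to $X$.

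\emph{Error bound.} Fix $i$. Because $f$ is continuously differentiable, the fundamental theorem of calculus along the segment from $y$ to $y+he_i$ gives
\[
f(y+he_i)-f(y)-h\,\partial_i f(y)=\int_0^1 \ip{\nabla f(y+the_i)-\nabla f(y)}{he_i}\,dt.
\]
We bound the integrand by Cauchy--Schwarz and then the global $\beta$-Lipschitz property of $\nabla f$ from the definition of $\cG_\beta(d,R)$:
\[
\bigl|\ip{\nabla f(y+the_i)-\nabla f(y)}{he_i}\bigr|\le \beta t h\cdot h=\beta t h^2 .
\]
Integrating over $t\in[0,1]$ yields $|f(y+he_i)-f(y)-h\,\partial_i f(y)|\le\beta h^2/2$. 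Dividing by $h$ gives the coordinatewise bound
\[
\bigl|[g_h(y)]_i-\partial_i f(y)\bigr|\le\beta h/2 .
\]

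\emph{Summing.} Each of the $d$ coordinates has error at most $\beta h/2$, so
\[
\norm{g_h(y)-\nabla f(y)}_2\le\sqrt{d}\,\beta h/2,
\]
which is \eqref{eq:fd-error}.

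Neither convexity nor the minimizer promise is used; only smoothness and the radius constraint $h\le R/2$ matter. The step needing the most care is feasibility, since that is where the restriction of $y$ to the inner ball $\cC$ is essential. The remaining steps are routine.
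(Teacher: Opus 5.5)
Your proposal is correct and follows essentially the same route as the paper: the triangle inequality $\norm{y+he_i}_2\le\norm{y}_2+h\le R$ for feasibility, the coordinatewise bound $\beta h^2/2$ from $\beta$-smoothness along $t\mapsto y+te_i$, and then summation of the squared coordinate errors. The paper simply cites the one-dimensional smoothness bound, whereas you derive it explicitly via the fundamental theorem of calculus, which is a harmless elaboration.
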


\begin{proof}
The triangle inequality gives
\[
\norm{y+he_i}_2\le\norm y_2+h\le R,
\]
so every query is admissible.  Smoothness along the line $t\mapsto y+te_i$ gives
\[
\left|f(y+he_i)-f(y)-h\,\partial_i f(y)\right|
\le\frac{\beta h^2}{2}.
\]
After division by $h$, each coordinate error is at most $\beta h/2$.  Summing the squared coordinate bounds proves \eqref{eq:fd-error}.
\end{proof}

\begin{lemma}[Inexact oracle inequalities]
\label{lem:inexact-oracle}
Let $x,y\in\cC$ and suppose
\[
\norm{g-\nabla f(y)}_2\le\delta_g.
\]
Set
\begin{equation}
\label{eq:oracle-errors}
\overline\beta:=2\beta,
\qquad
\delta_\ell:=R\delta_g,
\qquad
\delta_u:=\frac{\delta_g^2}{2\beta}.
\end{equation}
Then
\begin{align}
f(x)
&\ge f(y)+\ip{g}{x-y}-\delta_\ell,
\label{eq:oracle-lower}\\
f(x)
&\le f(y)+\ip{g}{x-y}
+\frac{\overline\beta}{2}\norm{x-y}_2^2+\delta_u.
\label{eq:oracle-upper}
\end{align}
\end{lemma}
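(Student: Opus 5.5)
Both inequalities come from writing $g=\nabla f(y)+e$ with $\norm{e}_2\le\delta_g$ and perturbing the exact first-order bounds for $f$. Here $f$ is convex and $\beta$-smooth; the lemma is used for $f\in\cG_\beta(d,R)$ as in \cref{lem:fd}. The two pieces of geometry needed are that $x,y\in\cC=B_2^d(R/2)$ gives $\norm{x-y}_2\le R$, and Young's inequality.

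\emph{Lower bound \eqref{eq:oracle-lower}.}
1. Convexity gives $f(x)\ge f(y)+\ip{\nabla f(y)}{x-y}$.
2. Substitute $\nabla f(y)=g-e$ to get $f(x)\ge f(y)+\ip{g}{x-y}-\ip{e}{x-y}$.
3. By Cauchy--Schwarz and the diameter bound, $\ip{e}{x-y}\le\delta_g\norm{x-y}_2\le R\delta_g=\delta_\ell$.

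\emph{Upper bound \eqref{eq:oracle-upper}.}
1. The descent lemma from $\beta$-smoothness gives $f(x)\le f(y)+\ip{\nabla f(y)}{x-y}+\frac\beta2\norm{x-y}_2^2$.
2. Write $\ip{\nabla f(y)}{x-y}=\ip{g}{x-y}-\ip{e}{x-y}$.
3. Apply Young's inequality in the form $-\ip{e}{x-y}\le\frac{\norm{e}_2^2}{2\beta}+\frac\beta2\norm{x-y}_2^2$.
4. This yields the quadratic term $\beta\norm{x-y}_2^2=\frac{\overline\beta}{2}\norm{x-y}_2^2$ with $\overline\beta=2\beta$, and the additive term $\delta_g^2/(2\beta)=\delta_u$.

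There is no real obstacle here. The only points needing care are choosing the Young weight so that the constants come out exactly as $\overline\beta=2\beta$ and $\delta_u=\delta_g^2/(2\beta)$, and observing that the lower bound uses the domain bound $\norm{x-y}_2\le R$ while the upper bound holds for all $x,y$.
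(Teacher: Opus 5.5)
Your proposal is correct and follows the paper's proof: for \eqref{eq:oracle-lower}, convexity plus Cauchy--Schwarz with $\operatorname{diam}(\cC)=R$; for \eqref{eq:oracle-upper}, the $\beta$-smoothness descent bound plus Young's inequality with weight $\beta$. The only cosmetic difference is that the paper first bounds the error term by $\delta_g\norm{x-y}_2$ and then applies Young, whereas you apply Young directly to $-\ip{e}{x-y}$; the computation is the same.
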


\begin{proof}
Write $e=g-\nabla f(y)$.  Convexity gives
\[
f(x)\ge f(y)+\ip{\nabla f(y)}{x-y}
=f(y)+\ip g{x-y}-\ip e{x-y}.
\]
Since $\operatorname{diam}(\cC)=R$, the final inner product is at most $R\delta_g$ in absolute value, proving \eqref{eq:oracle-lower}.  By $\beta$-smoothness and Young's inequality,
\begin{align*}
f(x)
&\le f(y)+\ip{g}{x-y}
+\delta_g\norm{x-y}_2+\frac\beta2\norm{x-y}_2^2\\
&\le f(y)+\ip{g}{x-y}
+\beta\norm{x-y}_2^2+\frac{\delta_g^2}{2\beta},
\end{align*}
which is \eqref{eq:oracle-upper}.
\end{proof}

\subsection{An error-robust accelerated projected method}

Fix an arbitrary initial point $x_0=z_0\in\cC$, let $A_0=0$, and define
\[
\psi_0(x):=\frac12\norm{x-x_0}_2^2,
\qquad x\in\cC.
\]
For $k=0,\ldots,N-1$, choose the unique $\alpha_{k+1}>0$ satisfying
\begin{equation}
\label{eq:alpha}
A_{k+1}:=A_k+\alpha_{k+1}
=\overline\beta\alpha_{k+1}^2.
\end{equation}
Given $x_k,z_k\in\cC$, set
\begin{equation}
\label{eq:upper-y}
y_{k+1}
:=
\frac{A_kx_k+\alpha_{k+1}z_k}{A_{k+1}}.
\end{equation}
At $y_{k+1}$ obtain $f(y_{k+1})$ and an approximate gradient $g_{k+1}$ satisfying \cref{lem:inexact-oracle}.  Update
\begin{align}
\psi_{k+1}(x)
&:=
\psi_k(x)+\alpha_{k+1}
\left[f(y_{k+1})+\ip{g_{k+1}}{x-y_{k+1}}\right],
\label{eq:psi-update}\\
z_{k+1}
&:=\argminop_{x\in\cC}\psi_{k+1}(x),
\label{eq:z-update}\\
x_{k+1}
&:=
\frac{A_kx_k+\alpha_{k+1}z_{k+1}}{A_{k+1}}.
\label{eq:x-update}
\end{align}
All three points $y_{k+1},z_{k+1},x_{k+1}$ lie in $\cC$.  Moreover,
\[
z_{k+1}
=
\Pi_{\cC}\left(x_0-\sum_{i=1}^{k+1}\alpha_i g_i\right).
\]

\begin{lemma}[Accelerated estimate with deterministic errors]
\label{lem:accelerated-error}
Suppose the same nonnegative numbers $\delta_\ell,\delta_u$ satisfy \eqref{eq:oracle-lower}--\eqref{eq:oracle-upper} at every iteration.  If $x^\star\in\argminop_{x\in\cC}f(x)$, then
\begin{equation}
\label{eq:accelerated-bound}
f(x_N)-f(x^\star)
\le
\frac{\norm{x^\star-x_0}_2^2}{2A_N}
+(N+1)\delta_\ell+N\delta_u,
\end{equation}
and
\begin{equation}
\label{eq:A-lower}
A_N\ge\frac{N^2}{4\overline\beta}
=\frac{N^2}{8\beta}.
\end{equation}
\end{lemma}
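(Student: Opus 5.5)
The plan is the standard estimate-sequence argument for the similar-triangles method, carried along with the inexact-oracle errors of \cref{lem:inexact-oracle}. The invariant I will establish by induction on $k$ is
\[
A_k f(x_k)\le \psi_k^\star + E_k,
\qquad
\psi_k^\star:=\min_{x\in\cC}\psi_k(x),
\]
with an error term $E_k$ that accumulates $\delta_\ell$ and $\delta_u$ linearly. The base case $k=0$ is trivial, since $A_0=0$ and $\psi_0\ge0$.

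\textbf{Upper bound on $\psi_N^\star$.} Apply the lower inexact inequality \eqref{eq:oracle-lower} with $x=x^\star$ at each $y_{i}$. This gives
\[
\psi_N(x^\star)\le \tfrac12\norm{x^\star-x_0}_2^2+A_N f(x^\star)+A_N\delta_\ell,
\]
and hence $\psi_N^\star\le\psi_N(x^\star)$ is bounded by the same quantity. Combining this with the invariant and dividing by $A_N$ produces \eqref{eq:accelerated-bound}, provided the invariant error satisfies $E_N/A_N\le N\delta_\ell+N\delta_u$. The remaining $\delta_\ell$ then gives the stated $(N+1)\delta_\ell$, and slack there is harmless.

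\textbf{Inductive step.} This is the main computation. Since $\psi_k$ is $1$-strongly convex and $z_k$ minimizes it over $\cC$, we have
\[
\psi_{k+1}(x)\ge \psi_k^\star+\tfrac12\norm{x-z_k}_2^2+\alpha_{k+1}\bigl[f(y_{k+1})+\ip{g_{k+1}}{x-y_{k+1}}\bigr].
\]
Evaluate this at $x=z_{k+1}$. Use the invariant for $A_kf(x_k)$, and lower-bound $f(x_k)\ge f(y_{k+1})+\ip{g_{k+1}}{x_k-y_{k+1}}-\delta_\ell$ by \eqref{eq:oracle-lower}; this costs $A_k\delta_\ell$. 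By the choice \eqref{eq:upper-y} of $y_{k+1}$, the linear terms combine into
\[
A_{k+1}\bigl[f(y_{k+1})+\ip{g_{k+1}}{x_{k+1}-y_{k+1}}\bigr],
\]
because $x_{k+1}-y_{k+1}=\frac{\alpha_{k+1}}{A_{k+1}}(z_{k+1}-z_k)$. The quadratic term then satisfies
\[
\tfrac12\norm{z_{k+1}-z_k}_2^2=\tfrac{A_{k+1}^2}{2\alpha_{k+1}^2}\norm{x_{k+1}-y_{k+1}}_2^2=\tfrac{A_{k+1}\overline\beta}{2}\norm{x_{k+1}-y_{k+1}}_2^2
\]
by \eqref{eq:alpha}. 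The upper inexact inequality \eqref{eq:oracle-upper} then yields $\psi_{k+1}^\star\ge A_{k+1}f(x_{k+1})-A_{k+1}\delta_u-E_k-A_k\delta_\ell$. This gives the recursion
\[
E_{k+1}=E_k+A_k\delta_\ell+A_{k+1}\delta_u.
\]
Since the $A_i$ are increasing, $E_N\le N A_N(\delta_\ell+\delta_u)$, which is what is needed.

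\textbf{Growth of $A_N$.} For \eqref{eq:A-lower}, solve \eqref{eq:alpha}:
\[
\alpha_{k+1}=\frac{1+\sqrt{1+4\overline\beta A_k}}{2\overline\beta}\ge\frac1{2\overline\beta}+\sqrt{A_k/\overline\beta}.
\]
Then $\sqrt{A_{k+1}}\ge\sqrt{A_k}+\frac1{2\sqrt{\overline\beta}}$, since
\[
A_{k+1}=A_k+\alpha_{k+1}\ge(\sqrt{A_k}+1/(2\sqrt{\overline\beta}))^2
\]
follows from $\alpha_{k+1}\ge\sqrt{A_k/\overline\beta}+1/(4\overline\beta)$. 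Summing over $k$ gives $A_N\ge N^2/(4\overline\beta)$.

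The main obstacle is the bookkeeping in the inductive step. The point is to use the projection's strong-convexity three-point inequality rather than unconstrained optimality, and to check that all points stay in $\cC$ so that \cref{lem:inexact-oracle} applies.
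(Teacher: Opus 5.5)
Your proposal is correct and follows the paper's proof essentially step for step. It uses the same invariant $\psi_k^\star\ge A_kf(x_k)-E_k$ with the same recursion $E_{k+1}=E_k+A_k\delta_\ell+A_{k+1}\delta_u$, the same three-point bound from the $1$-strong convexity of $\psi_k$ at $z_{k+1}$, the same coupling identity with $A_{k+1}=\overline\beta\alpha_{k+1}^2$, and the same evaluation of $\psi_N$ at $x^\star$; your growth argument for $\sqrt{A_k}$ is the paper's $s_k=\sqrt{\overline\beta A_k}$, $s_{k+1}\ge s_k+\tfrac12$ computation written in unnormalized variables.
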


\begin{proof}
Write $\psi_k^*:=\min_{x\in\cC}\psi_k(x)=\psi_k(z_k)$.  We prove by induction that
\begin{equation}
\label{eq:estimate-induction}
\psi_k^*\ge A_k f(x_k)-E_k,
\end{equation}
where $E_0=0$ and
\begin{equation}
\label{eq:E-recursion}
E_{k+1}:=E_k+A_k\delta_\ell+A_{k+1}\delta_u.
\end{equation}
The claim is trivial at $k=0$.  Since $\psi_k$ is $1$-strongly convex and $z_k$ minimizes it over the closed convex set $\cC$, first-order optimality gives
\begin{equation}
\label{eq:psi-strong}
\psi_k(z_{k+1})
\ge
\psi_k(z_k)+\frac12\norm{z_{k+1}-z_k}_2^2.
\end{equation}
Let $\Delta z_k:=z_{k+1}-z_k$, $\alpha:=\alpha_{k+1}$, $A:=A_{k+1}$, and $y:=y_{k+1}$.  Combining \eqref{eq:psi-update}, \eqref{eq:psi-strong}, and the induction hypothesis yields
\[
\psi_{k+1}^*
\ge
A_k f(x_k)-E_k
+\frac12\norm{\Delta z_k}_2^2
+\alpha\left[f(y)+\ip{g_{k+1}}{z_{k+1}-y}\right].
\]
The lower oracle inequality at $x_k$ implies
\[
A_k f(x_k)
\ge
A_k\left[f(y)+\ip{g_{k+1}}{x_k-y}-\delta_\ell\right].
\]
Using
\begin{equation}
\label{eq:coupling-identity}
A_k(x_k-y)+\alpha(z_{k+1}-y)=\alpha\Delta z_k,
\end{equation}
we obtain
\begin{equation}
\label{eq:estimate-mid}
\psi_{k+1}^*
\ge
A f(y)+\alpha\ip{g_{k+1}}{\Delta z_k}
+\frac12\norm{\Delta z_k}_2^2
-E_k-A_k\delta_\ell.
\end{equation}
On the other hand,
\[
x_{k+1}-y=\frac\alpha A\Delta z_k.
\]
Applying the upper oracle inequality and using $A=\overline\beta\alpha^2$ gives
\begin{align*}
A f(x_{k+1})
&\le
A f(y)+\alpha\ip{g_{k+1}}{\Delta z_k}
+\frac{\overline\beta\alpha^2}{2A}\norm{\Delta z_k}_2^2
+A\delta_u\\
&=
A f(y)+\alpha\ip{g_{k+1}}{\Delta z_k}
+\frac12\norm{\Delta z_k}_2^2
+A\delta_u.
\end{align*}
Comparison with \eqref{eq:estimate-mid} proves \eqref{eq:estimate-induction} at $k+1$.

Evaluate $\psi_N$ at $x^\star$.  The lower oracle inequality gives
\[
f(y_i)+\ip{g_i}{x^\star-y_i}
\le f(x^\star)+\delta_\ell,
\]
so
\[
\psi_N^*
\le
\frac12\norm{x^\star-x_0}_2^2
+A_Nf(x^\star)+A_N\delta_\ell.
\]
Combining this with \eqref{eq:estimate-induction} and using $A_k\le A_N$ gives \eqref{eq:accelerated-bound}.

Finally, put $s_k:=\sqrt{\overline\beta A_k}$.  Equation \eqref{eq:alpha} implies
\[
s_{k+1}^2-s_k^2=s_{k+1},
\qquad
s_{k+1}=\frac{1+\sqrt{1+4s_k^2}}{2}\ge s_k+\frac12.
\]
Since $s_0=0$, induction gives $s_N\ge N/2$, proving \eqref{eq:A-lower}.
\end{proof}

\begin{corollary}[Finite-difference accelerated subroutine]
\label{cor:subroutine}
Let $x_0\in\cC$, let $N\ge1$, and let $\delta_g>0$ satisfy
\[
h:=\frac{2\delta_g}{\beta\sqrt d}\le\frac R2.
\]
Using \eqref{eq:fd} at every accelerated iteration yields an output $x_N\in\cC$ satisfying
\begin{equation}
\label{eq:subroutine-bound}
f(x_N)-f^\star
\le
\frac{4\beta\norm{x_0-x^\star}_2^2}{N^2}
+(N+1)R\delta_g
+\frac{N\delta_g^2}{2\beta}.
\end{equation}
The subroutine uses $(d+1)N$ exact value calls, all in $X$.
\end{corollary}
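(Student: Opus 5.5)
The corollary follows by combining \cref{lem:fd}, \cref{lem:inexact-oracle}, and \cref{lem:accelerated-error}; the only work is matching constants and counting queries. The setting is $f\in\cF_{\mu,\beta}(d,R)\subseteq\cG_\beta(d,R)$, since strong convexity implies convexity. For the general statement I only use that $f$ is convex and $\beta$-smooth with minimizer $x^\star=x_f^\star\in\cC$. In particular $x^\star$ is also a minimizer of $f$ over $\cC$, so $f(x^\star)=f^\star$ and \cref{lem:accelerated-error} applies with this $x^\star$.

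\textbf{Oracle inputs.} The choice $h=2\delta_g/(\beta\sqrt d)$ makes the bound of \cref{lem:fd} equal to exactly $\beta h\sqrt d/2=\delta_g$. The hypothesis $h\le R/2$ and the fact that every $y_{k+1}\in\cC$ (a convex combination of points of $\cC$) guarantee that the $d+1$ points $y_{k+1}$ and $y_{k+1}+he_i$ lie in $X$. Hence $g_{k+1}:=g_h(y_{k+1})$ satisfies $\norm{g_{k+1}-\nabla f(y_{k+1})}_2\le\delta_g$. The value $f(y_{k+1})$ needed in \eqref{eq:psi-update} is one of these $d+1$ queried values, so each iteration costs exactly $d+1$ calls and the total is $(d+1)N$.

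\textbf{Error bound.} \Cref{lem:inexact-oracle} supplies \eqref{eq:oracle-lower}--\eqref{eq:oracle-upper} at every iteration with the same constants $\overline\beta=2\beta$, $\delta_\ell=R\delta_g$, and $\delta_u=\delta_g^2/(2\beta)$. Its hypothesis $x,y\in\cC$ holds for the points where the lemma is invoked in the proof of \cref{lem:accelerated-error}: $x_k$, $x_{k+1}$, $x^\star$, and $y_{k+1}$ all lie in $\cC$. Then \eqref{eq:accelerated-bound} together with $A_N\ge N^2/(8\beta)$ from \eqref{eq:A-lower} gives
\[
\frac{\norm{x^\star-x_0}_2^2}{2A_N}\le\frac{4\beta\norm{x_0-x^\star}_2^2}{N^2},
\]
plus the error terms $(N+1)R\delta_g+N\delta_g^2/(2\beta)$. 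This is exactly \eqref{eq:subroutine-bound}, and $x_N\in\cC$ because it is a convex combination of points of $\cC$.

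\textbf{Main obstacle.} The only delicate point is checking that the recursion never evaluates outside $X$ and that the oracle inequalities are applied only at points of $\cC$, where the diameter bound $R$ used for $\delta_\ell$ is valid. Both follow from the convex-combination structure of \eqref{eq:upper-y}--\eqref{eq:x-update} and from the projection in \eqref{eq:z-update}.
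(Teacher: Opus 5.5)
Your proposal is correct and follows the same route as the paper: it applies \cref{lem:fd} with $h=2\delta_g/(\beta\sqrt d)$ to get gradient error $\delta_g$, feeds this into \cref{lem:inexact-oracle} and \cref{lem:accelerated-error}, and uses $A_N\ge N^2/(8\beta)$ to bound the leading term. Your additional checks are exactly the details the paper's one-line proof leaves implicit: that the oracle inequalities are invoked only at points of $\cC$, that all queries stay in $X$, and that $f(y_{k+1})$ is shared between the finite difference and the model update, which gives $(d+1)N$ calls.
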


\begin{proof}
Use \cref{lem:fd} to obtain gradient error at most $\delta_g$, then combine \cref{lem:inexact-oracle,lem:accelerated-error}.  The first term in \eqref{eq:accelerated-bound} is at most $4\beta\norm{x_0-x^\star}^2/N^2$ by \eqref{eq:A-lower}.
\end{proof}

\subsection{The direct smooth-convex branch}

\begin{proposition}[Direct accuracy upper bound]
\label{prop:direct-upper}
For $0<\epsilon\le\beta R^2$,
\begin{equation}
\label{eq:direct-upper}
N_{\epsilon,\mathrm{det-ad}}^{\mathrm{sc,val}}(d,R,\mu,\beta)
\le
(d+1)\left\lceil2\sqrt{\frac{\beta R^2}{\epsilon}}\right\rceil
\le
6d\sqrt Q.
\end{equation}
\end{proposition}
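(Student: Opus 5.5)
We apply \cref{cor:subroutine} with the starting point $x_0=0\in\cC$ and the iteration count
\[
N:=\left\lceil 2\sqrt{\beta R^2/\epsilon}\right\rceil\ge 2,
\]
using $\epsilon\le\beta R^2$. Since $x^\star=x_f^\star\in\cC$, we have $\norm{x_0-x^\star}_2\le R/2$. The first term of \eqref{eq:subroutine-bound} is therefore at most
\[
\frac{4\beta (R/2)^2}{N^2}=\frac{\beta R^2}{N^2}\le\frac{\epsilon}{4},
\]
because $N^2\ge 4\beta R^2/\epsilon$. The function $f$ lies in $\cG_\beta(d,R)$, since strong convexity implies convexity and the minimizer is unique and lies in $\cC$. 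Hence \cref{lem:fd,lem:inexact-oracle} apply, and the minimizer over $\cC$ coincides with the global minimizer, so $f(x^\star)=f^\star$.

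Next we choose $\delta_g$ small enough that the error terms sum to at most $3\epsilon/4$. A concrete choice is
\[
\delta_g:=\min\left\{\frac{\epsilon}{4(N+1)R},\ \sqrt{\frac{\beta\epsilon}{N}}\right\}.
\]
With this choice $(N+1)R\delta_g\le\epsilon/4$ and $N\delta_g^2/(2\beta)\le\epsilon/2$. We must also check the admissibility condition
\[
h=\frac{2\delta_g}{\beta\sqrt d}\le\frac R2.
\]
This follows from
\[
\delta_g\le\frac{\epsilon}{4(N+1)R}\le\frac{\beta R}{12},
\]
so $h\le R/(6\sqrt d)\le R/2$. Exact values put no lower limit on $h$, so making $\delta_g$ arbitrarily small costs no extra queries. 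This is the only point that needs care. The output $x_N$ lies in $\cC\subset X$, all queries lie in $X$ by \cref{cor:subroutine}, and $f(x_N)-f^\star\le\epsilon$.

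The algorithm is deterministic and adaptive, and it uses $(d+1)N$ calls, which gives the first inequality in \eqref{eq:direct-upper}. For the second inequality, $Q\ge1$ gives
\[
\left\lceil 2\sqrt Q\right\rceil\le 2\sqrt Q+1\le 3\sqrt Q,
\]
and $d+1\le 2d$. Together these yield $(d+1)N\le 6d\sqrt Q$.
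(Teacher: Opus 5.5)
Your proposal is correct and follows essentially the same route as the paper: start at $x_0=0$, take $N=\lceil 2\sqrt{\beta R^2/\epsilon}\rceil$, check that the finite-difference step satisfies $h\le R/2$, and split the budget across the three terms of \eqref{eq:subroutine-bound}. The only difference is cosmetic: the paper uses the single choice $\delta_g=\epsilon/(8R(N+1))$, which already bounds the quadratic error term by $\epsilon/128$, so the second entry $\sqrt{\beta\epsilon/N}$ in your minimum is redundant, though harmless.
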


\begin{proof}
Take $x_0=0$ and
\[
N:=\left\lceil2\sqrt{\frac{\beta R^2}{\epsilon}}\right\rceil,
\qquad
\delta_g:=\frac{\epsilon}{8R(N+1)}.
\]
Since $\epsilon\le\beta R^2$, the associated finite-difference step satisfies $h\le R/2$.  Also $\norm{x^\star}\le R/2$.  Therefore \eqref{eq:subroutine-bound} gives
\[
\frac{4\beta\norm{x^\star}^2}{N^2}
\le\frac{\beta R^2}{N^2}
\le\frac\epsilon4,
\]
\[
(N+1)R\delta_g=\frac\epsilon8,
\]
and
\[
\frac{N\delta_g^2}{2\beta}
=
\frac{N\epsilon^2}{128\beta R^2(N+1)^2}
\le\frac\epsilon{128}.
\]
Thus the output error is strictly smaller than $\epsilon$.  The call bound follows as in \eqref{eq:direct-upper}.
\end{proof}

\subsection{Burn-in and strongly convex restart}

\begin{proposition}[One $O(\sqrt\kappa)$ burn-in phase]
\label{prop:burnin}
Let
\[
K:=\left\lceil8\sqrt\kappa\right\rceil.
\]
Starting from $0$ and using the accelerated subroutine with
\begin{equation}
\label{eq:burnin-delta}
\delta_g^{(0)}:=\frac{\mu R}{128(K+1)}
\end{equation}
produces $x^{(0)}\in\cC$ such that
\begin{equation}
\label{eq:burnin-gap}
f(x^{(0)})-f^\star\le\Delta_0:=\frac{\mu R^2}{32}.
\end{equation}
\end{proposition}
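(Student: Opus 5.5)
We apply \cref{cor:subroutine} with $x_0=0$, $N=K=\lceil 8\sqrt\kappa\rceil$, and $\delta_g=\delta_g^{(0)}$ from \eqref{eq:burnin-delta}. We then check that each of the three error terms in \eqref{eq:subroutine-bound} is a small fraction of $\Delta_0=\mu R^2/32$. Since $f\in\cF_{\mu,\beta}(d,R)\subset\cG_\beta(d,R)$, the corollary applies once its step-size hypothesis is verified. That hypothesis is
\[
h=\frac{2\delta_g^{(0)}}{\beta\sqrt d}=\frac{2\mu R}{128(K+1)\beta\sqrt d}\le\frac{R}{2},
\]
which holds because $\mu\le\beta$, $K\ge 8$ and $d\ge1$. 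The corollary also gives that the output lies in $\cC$ and that all $(d+1)K$ calls lie in $X$.

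We bound the three terms as follows.

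\emph{Optimization term.} Since $\norm{x^\star}\le R/2$ and $K^2\ge 64\kappa$,
\[
\frac{4\beta\norm{x^\star}^2}{K^2}\le\frac{\beta R^2}{K^2}\le\frac{\beta R^2}{64\kappa}=\frac{\mu R^2}{64}=\frac{\Delta_0}{2}.
\]
Note that this uses only smoothness plus the initial-distance bound, not strong convexity. The constant $8$ in $K$ is chosen exactly so that this produces $\mu R^2/64$.

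\emph{Bias term.}
\[
(K+1)R\,\delta_g^{(0)}=\frac{\mu R^2}{128}=\frac{\Delta_0}{4}.
\]

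\emph{Quadratic error term.*} Using $K\le K+1$, $(K+1)\ge 9$ and $\mu\le\beta$,
\[
\frac{K(\delta_g^{(0)})^2}{2\beta}\le\frac{\mu^2R^2}{2\beta\cdot 128^2 (K+1)}\le\frac{\mu R^2}{2\cdot 128^2\cdot 9},
\]
which is far below $\Delta_0/4$.

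Summing the three terms gives $f(x^{(0)})-f^\star\le\Delta_0$.

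No step is a real obstacle; the argument is constant bookkeeping. The only point requiring care is that the quadratic term must be controlled using $\mu\le\beta$, so that no $\kappa$-dependence leaks in. I would also record that the phase costs $(d+1)K=O(d\sqrt\kappa)$ calls, for later use in the restart count.
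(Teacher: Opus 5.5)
Your proposal is correct and matches the paper's proof: apply \cref{cor:subroutine} with $x_0=0$, $N=K$, and $\delta_g=\delta_g^{(0)}$, verify the step-size condition, and bound the three terms of \eqref{eq:subroutine-bound} by $\mu R^2/64$, $\mu R^2/128$, and a negligible amount. Your bound on the quadratic term via $\mu\le\beta$ and $K+1\ge 9$ is an equally valid variant of the paper's bound $\mu R^2/32768$.
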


\begin{proof}
The finite-difference step is admissible because
\[
\frac{2\delta_g^{(0)}}{\beta\sqrt d}
=\frac{R}{64\kappa(K+1)\sqrt d}
\le\frac R2.
\]
Using $\norm{x^\star}\le R/2$ and $K^2\ge64\kappa$ in \eqref{eq:subroutine-bound},
\[
\frac{4\beta\norm{x^\star}^2}{K^2}
\le\frac{\mu R^2}{64}.
\]
The two error terms satisfy
\[
(K+1)R\delta_g^{(0)}=\frac{\mu R^2}{128}
\]
and
\[
\frac{K(\delta_g^{(0)})^2}{2\beta}
=
\mu R^2\frac{K}{32768\kappa(K+1)^2}
\le\frac{\mu R^2}{32768}.
\]
Their sum is smaller than $\mu R^2/32$.
\end{proof}

\begin{proposition}[One restart halves a certified gap]
\label{prop:restart}
Suppose $x\in\cC$ satisfies
\[
f(x)-f^\star\le\Delta
\qquad\text{and}\qquad
0<\Delta\le\Delta_0.
\]
Run the accelerated subroutine for $K=\lceil8\sqrt\kappa\rceil$ iterations from $x$, using
\begin{equation}
\label{eq:restart-delta}
\delta_g:=\frac{\Delta}{16R(K+1)}.
\end{equation}
Then its output $x^+\in\cC$ satisfies
\begin{equation}
\label{eq:restart-half}
f(x^+)-f^\star\le\frac\Delta2.
\end{equation}
\end{proposition}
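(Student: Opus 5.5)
We apply \cref{cor:subroutine} with starting point $x_0=x$ and $N=K$. Then we bound the three terms of \eqref{eq:subroutine-bound} separately, aiming for at most $\Delta/4$, $\Delta/16$, and a negligible amount, respectively.

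\textbf{Step 1: the step size is admissible.} We need $h=2\delta_g/(\beta\sqrt d)\le R/2$. From $\Delta\le\Delta_0=\mu R^2/32$ and \eqref{eq:restart-delta} we get
\[
\delta_g\le\frac{\mu R}{512(K+1)},
\qquad
h\le\frac{R}{256\kappa(K+1)\sqrt d}\le\frac R2 .
\]

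\textbf{Step 2: the distance term.} This term is controlled by strong convexity. Since $x^\star$ is the unconstrained minimizer, $\nabla f(x^\star)=0$, and \eqref{eq:sc} gives
\[
\frac\mu2\norm{x-x^\star}_2^2\le f(x)-f^\star\le\Delta .
\]
Hence $\norm{x-x^\star}_2^2\le 2\Delta/\mu$. Since $K^2\ge64\kappa$, the first term of \eqref{eq:subroutine-bound} satisfies
\[
\frac{4\beta\norm{x-x^\star}_2^2}{K^2}\le\frac{8\beta\Delta}{\mu K^2}\le\frac{8\kappa\Delta}{64\kappa}=\frac\Delta8 .
\]
The point $x^\star$ here is the same point used in \cref{lem:accelerated-error}, since $x^\star\in\cC$ minimizes $f$ over $\cC$. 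This step is the only real content of the proof: the restart contracts because strong convexity converts the function gap into a squared-distance bound. Everything else is bookkeeping.

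\textbf{Step 3: the error terms.} The linear error term is
\[
(K+1)R\delta_g=\frac{\Delta}{16}.
\]
For the quadratic term, $\delta_g\le\mu R/(512(K+1))$ from Step 1 gives
\[
\frac{K\delta_g^2}{2\beta}=\frac{K}{2\beta}\cdot\frac{\Delta}{16R(K+1)}\cdot\delta_g\le\frac{K\,\mu R\,\Delta}{2\beta\cdot16R(K+1)\cdot512(K+1)}\le\frac{\Delta}{16384\,\kappa(K+1)},
\]
which is tiny.

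\textbf{Conclusion.} Summing the three bounds,
\[
f(x^+)-f^\star\le\frac\Delta8+\frac\Delta{16}+\frac\Delta{16384}<\frac\Delta2,
\]
which is \eqref{eq:restart-half}. Finally, $x^+\in\cC$ because the accelerated iterates stay in $\cC$, as noted before \cref{lem:accelerated-error}.
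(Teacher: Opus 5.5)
Your proposal is correct and follows the paper's proof essentially line for line: strong convexity turns the certified gap into $\norm{x-x^\star}_2^2\le 2\Delta/\mu$, which bounds the distance term by $\Delta/8$; the two error terms contribute $\Delta/16$ and a negligible amount; and admissibility of the finite-difference step follows from $\Delta\le\mu R^2/32$. The only cosmetic difference is in the quadratic error term, which you bound through $\delta_g\le\mu R/(512(K+1))$ instead of through $\Delta\le\beta R^2/32$; this gives the same, in fact slightly sharper, constant.
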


\begin{proof}
Strong convexity gives
\[
\norm{x-x^\star}_2^2
\le\frac{2\Delta}{\mu}.
\]
Consequently, the first term in \eqref{eq:subroutine-bound} is at most
\[
\frac{8\beta\Delta}{\mu K^2}
\le\frac\Delta8.
\]
The lower-model error contributes
\[
(K+1)R\delta_g=\frac\Delta{16}.
\]
Because $\Delta\le\mu R^2/32\le\beta R^2/32$,
\[
\frac{K\delta_g^2}{2\beta}
=
\frac{K\Delta^2}{512\beta R^2(K+1)^2}
\le\frac{\Delta}{16384}.
\]
The total is strictly below $\Delta/2$.  The finite-difference displacement is also admissible since
\[
\frac{2\delta_g}{\beta\sqrt d}
\le\frac{R}{256(K+1)\sqrt d}
\le\frac R2.
\]
\end{proof}

\begin{proposition}[Restarted strongly convex upper bound]
\label{prop:sc-upper}
Let
\[
S:=\max\left\{0,\left\lceil\log_2\frac{\mu R^2}{32\epsilon}\right\rceil\right\}.
\]
Then
\begin{equation}
\label{eq:sc-upper-explicit}
N_{\epsilon,\mathrm{det-ad}}^{\mathrm{sc,val}}(d,R,\mu,\beta)
\le
(d+1)\left\lceil8\sqrt\kappa\right\rceil(1+S).
\end{equation}
In particular,
\begin{equation}
\label{eq:sc-upper-asymptotic}
N_{\epsilon,\mathrm{det-ad}}^{\mathrm{sc,val}}(d,R,\mu,\beta)
\le
C d\sqrt\kappa
\left[1+\logp{\frac{\mu R^2}{\epsilon}}\right]
\end{equation}
for a universal constant $C$.
\end{proposition}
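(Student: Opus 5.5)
The algorithm is the natural composition of the previous two propositions. First run the burn-in phase of \cref{prop:burnin} from $0$. Then run $S$ successive restart phases of \cref{prop:restart}, each started from the previous output, and output the final point.

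\textbf{Correctness.} Set $\Delta^{(0)}:=\Delta_0=\mu R^2/32$ and $\Delta^{(j)}:=2^{-j}\Delta_0$. By induction on $j$:
\begin{itemize}
\item \cref{prop:burnin} gives $x^{(0)}\in\cC$ with $f(x^{(0)})-f^\star\le\Delta^{(0)}$.
\item If $x^{(j)}\in\cC$ satisfies $f(x^{(j)})-f^\star\le\Delta^{(j)}$, then $0<\Delta^{(j)}\le\Delta_0$. So \cref{prop:restart}, applied with $\Delta=\Delta^{(j)}$ and step size $\delta_g=\Delta^{(j)}/(16R(K+1))$, yields $x^{(j+1)}\in\cC$ with gap at most $\Delta^{(j+1)}$.
\end{itemize}
The step sizes depend only on the known quantities $\mu,\beta,R,d,j$, and not on $f$. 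The algorithm is therefore a fixed deterministic adaptive procedure, and every query lies in $X$ by \cref{cor:subroutine}.

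After $S$ restarts the gap is at most $2^{-S}\mu R^2/32$. By the choice of $S$, this is at most $\epsilon$. When $S=0$ we have $\mu R^2/32\le\epsilon$, so the burn-in alone already suffices.

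\textbf{Call count.} Each phase runs $K=\lceil8\sqrt\kappa\rceil$ accelerated iterations. By \cref{cor:subroutine} each phase uses $(d+1)K$ calls. There are $1+S$ phases, which gives \eqref{eq:sc-upper-explicit}. This is the explicit bound and needs no further care.

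\textbf{Asymptotic form.} The only bookkeeping is converting \eqref{eq:sc-upper-explicit} into \eqref{eq:sc-upper-asymptotic}, using three elementary bounds:
\begin{itemize}
\item $d+1\le2d$.
\item Since $\kappa\ge1$, $\lceil8\sqrt\kappa\rceil\le9\sqrt\kappa$.
\item If $\mu R^2/(32\epsilon)\le1$ then $S=0$. Otherwise
\[
S\le1+\log_2\frac{\mu R^2}{32\epsilon}\le1+\frac{1}{\log 2}\logp{\frac{\mu R^2}{\epsilon}}.
\]
Hence $1+S\le2+2\logp{\mu R^2/\epsilon}$.
\end{itemize}
Multiplying these gives the claim with, for example, $C=36$. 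Since $Q/\kappa=\mu R^2/\epsilon$, this is the form used in \cref{thm:upper-main}.
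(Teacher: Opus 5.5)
Your proposal is correct and matches the paper's proof: one burn-in phase via \cref{prop:burnin}, then $S$ restarts via \cref{prop:restart} with certified gaps $2^{-s}\Delta_0$, each phase costing $(d+1)K$ calls. The asymptotic form then follows from $d+1\le 2d$, $K\le 9\sqrt\kappa$, and the definition of $S$. Your explicit estimate $1+S\le 2+2\logp{\mu R^2/\epsilon}$ (yielding $C=36$) and your remark that all step sizes depend only on public parameters spell out what the paper leaves implicit.
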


\begin{proof}
Apply \cref{prop:burnin}.  If $\epsilon\ge\Delta_0$, stop.  Otherwise apply \cref{prop:restart} successively with certified gaps
\[
\Delta_s:=2^{-s}\Delta_0,
\qquad s=0,\ldots,S-1.
\]
After $S$ restarts the error is at most $2^{-S}\Delta_0\le\epsilon$.  Each phase uses $(d+1)K$ calls, proving \eqref{eq:sc-upper-explicit}; the asymptotic form follows from $d+1\le2d$, $K\le9\sqrt\kappa$, and the definition of $S$.
\end{proof}

\begin{proof}[Proof of \cref{thm:upper-main}]
Run the better of the algorithms in \cref{prop:direct-upper,prop:sc-upper}.  Since
\[
\frac{\mu R^2}{\epsilon}=\frac Q\kappa,
\]
their bounds give \eqref{eq:upper-main} after adjusting the universal constant.
\end{proof}

\section{Architecture of the lower bound}
\label{sec:lower-architecture}
\label{sec:overview}

This part of the proof establishes the hard-instance compiler in \cref{thm:compiler}.  The construction has a scalar chain length $m$ and a robustness radius $\rho$.  A biased maximum
\[
g_{m,\rho}(y)=\max_{1\le i\le m}\{y_i-8\rho i\}
\]
is smoothed by a Moreau envelope $h_{m,\rho}$.  Its dual problem is a concave quadratic maximization over the simplex.  When all coordinates from index $i$ onward lie in $[-\rho,\rho]$, moving any dual mass from a later coordinate to coordinate $i$ strictly improves the dual objective.  Hence every dual optimizer is supported on the first $i$ coordinates, and the smoothed value is exactly independent of the remaining tail.

The $m$ chain coordinates are embedded through an orthonormal frame $U=(u_1,\ldots,u_m)$.  Queries are grouped into blocks of size
\[
b=\left\lfloor\frac d4\right\rfloor.
\]
All replies in block $j$ are generated before $u_j$ is selected.  We then choose $u_j$ orthogonal to every point of the current block and with projection at most $\rho$ on every earlier query.  The current coordinate is therefore exactly zero, while every future coordinate is eventually made small.  Exact shielding makes the online reply independent of all unselected directions.

After the final query, one additional sentinel direction is selected orthogonal to the terminal output.  The remaining directions are chosen with small projection on all queries and the output.  Consequently the output has not reached the final chain coordinate.  A comparison point with all chain coordinates equal to $-R_0/\sqrt m$ then yields objective gap $\Omega(\beta_0 R^2/m^2)$.

The geometric cost is the simultaneous small-projection requirement.  Since all queries have norm at most $R$, a spherical-cap union bound finds the required direction whenever
\[
d\rho^2/R^2\gtrsim\log(d^2).
\]
The optimization calibration uses $\rho\asymp R/m^{3/2}$, giving the condition $m^3\log(ed)\lesssim d$.

\section{A smooth robust value chain}
\label{sec:chain}

Fix $m\ge1$ and $\rho>0$.  Define
\begin{equation}
\label{eq:gdef}
g_{m,\rho}(y):=\max_{1\le i\le m}\{y_i-8\rho i\},
\qquad y\in\R^m,
\end{equation}
and
\begin{equation}
\label{eq:hdef}
h_{m,\rho}(y)
:=
\inf_{z\in\R^m}
\left\{
 g_{m,\rho}(z)+\frac1\rho\norm{z-y}_2^2
\right\}.
\end{equation}
This is the Moreau envelope with parameter $\rho/2$.

\subsection{Smoothness and dual representation}

\begin{lemma}[Basic Moreau properties]
\label{lem:moreau}
The function $g_{m,\rho}$ is convex and $1$-Lipschitz.  The function $h_{m,\rho}$ is convex and continuously differentiable, and
\begin{align}
0&\le g_{m,\rho}(y)-h_{m,\rho}(y)\le\frac\rho4,
\label{eq:approx}\\
\norm{\nabla h_{m,\rho}(y)}_2&\le1,
\label{eq:hgrad}\\
\Lip(\nabla h_{m,\rho})&\le\frac2\rho.
\label{eq:hsmooth}
\end{align}
If $z_y$ is the unique minimizer in \eqref{eq:hdef}, then
\begin{equation}
\label{eq:proxradius}
\norm{z_y-y}_2\le\frac\rho2.
\end{equation}
\end{lemma}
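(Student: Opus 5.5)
I will prove the claims of \cref{lem:moreau} in order, using the explicit structure of $g_{m,\rho}$ and standard Moreau envelope facts. I will verify them directly rather than cite a reference.

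First, $g_{m,\rho}$ is a maximum of affine functions $y\mapsto y_i-8\rho i$, so it is convex. Each of these affine functions is $1$-Lipschitz in the Euclidean norm, since $|y_i-y'_i|\le\norm{y-y'}_2$. A pointwise maximum of $1$-Lipschitz functions is $1$-Lipschitz. Next, the objective $z\mapsto g_{m,\rho}(z)+\rho^{-1}\norm{z-y}_2^2$ is $(2/\rho)$-strongly convex and continuous, so it has a unique minimizer $z_y=\operatorname{prox}_{(\rho/2)g}(y)$. Convexity of $h_{m,\rho}$ follows because it is the infimal convolution of two convex functions; equivalently, it is a partial minimization of a jointly convex function of $(z,y)$.

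For \eqref{eq:proxradius}, I will use optimality of $z_y$ against the competitor $z=y$:
\[
g(z_y)+\rho^{-1}\norm{z_y-y}^2\le g(y).
\]
The Lipschitz property gives $g(y)-g(z_y)\le\norm{z_y-y}$, so $\rho^{-1}\norm{z_y-y}^2\le\norm{z_y-y}$. This yields $\norm{z_y-y}\le\rho$, which is off by a factor $2$ from the claim. To obtain $\rho/2$ I will instead use the optimality condition $0\in\partial g(z_y)+\tfrac2\rho(z_y-y)$. Every subgradient of $g$ lies in the simplex, hence has norm at most $1$, so $\norm{z_y-y}\le\rho/2$.

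The lower bound in \eqref{eq:approx} is immediate by taking $z=y$, which gives $h\le g$. For the upper bound,
\[
g(y)-h(y)=g(y)-g(z_y)-\rho^{-1}\norm{z_y-y}^2\le t-t^2/\rho\le \rho/4,
\]
where $t=\norm{z_y-y}$ and the final inequality maximizes over $t$.

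For differentiability and \eqref{eq:hgrad}, \eqref{eq:hsmooth}, I will invoke the standard Moreau identity
\[
\nabla h(y)=\tfrac2\rho\,(y-z_y)\in\partial g(z_y).
\]
To establish it, I will show the two-sided estimate
\[
h(y')-h(y)-\langle\tfrac2\rho(y-z_y),y'-y\rangle\in\left[0,\ \rho^{-1}\norm{y'-y}^2\right].
\]
The upper bound comes from plugging $z_y$ into the definition of $h(y')$. The lower bound comes from convexity of $h$ and the subgradient inclusion. The gradient norm bound \eqref{eq:hgrad} then follows because $\nabla h(y)\in\partial g(z_y)\subseteq\Delta_m$, whose elements have norm at most $1$. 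For the Lipschitz bound, firm nonexpansiveness of the prox operator gives that $y\mapsto y-z_y$ is nonexpansive, so $\Lip(\nabla h)\le 2/\rho$.

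The only delicate point is justifying differentiability with the correct constant. Rather than rederive it, I would cite the classical fact that the Moreau envelope with parameter $\lambda$ has a $1/\lambda$-Lipschitz gradient. Here $\lambda=\rho/2$, which gives exactly $2/\rho$.
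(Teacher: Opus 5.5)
Your proposal is correct and follows essentially the same route as the paper: the subgradient optimality condition for $\norm{z_y-y}_2\le\rho/2$, the bound $t-t^2/\rho\le\rho/4$ for \eqref{eq:approx}, the Moreau gradient formula $\nabla h(y)=\frac2\rho(y-z_y)$ for \eqref{eq:hgrad}, and firm nonexpansiveness of the prox for \eqref{eq:hsmooth}. Your extra derivation of the gradient formula is sound, with one correction: the lower half of your two-sided estimate follows from the subgradient inequality for $g$ at $z_y$ together with convexity of $\frac1\rho\norm{\cdot}_2^2$ (equivalently, joint convexity of $(z,y)\mapsto g(z)+\frac1\rho\norm{z-y}_2^2$), not from convexity of $h$ itself.
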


\begin{proof}
The function $g_{m,\rho}$ is the maximum of affine functions with slopes $e_1,\ldots,e_m$, hence it is convex and $1$-Lipschitz.  Choosing $z=y$ in \eqref{eq:hdef} gives $h(y)\le g(y)$.  Conversely, with $r=\norm{z-y}_2$,
\[
g(z)+\frac{r^2}{\rho}
\ge g(y)-r+\frac{r^2}{\rho}
\ge g(y)-\frac\rho4,
\]
because the minimum of $-r+r^2/\rho$ over $r\ge0$ is $-\rho/4$.  This proves \eqref{eq:approx}.

The objective in \eqref{eq:hdef} is strongly convex in $z$, so $z_y$ is unique.  First-order optimality gives an $s_y\in\partial g(z_y)$ such that
\[
s_y+\frac2\rho(z_y-y)=0.
\]
Every subgradient of a $1$-Lipschitz convex function has norm at most one, which proves \eqref{eq:proxradius}.  Standard Moreau-envelope calculus gives
\[
\nabla h(y)=\frac2\rho(y-z_y).
\]
Thus \eqref{eq:hgrad} follows, and firm nonexpansiveness of the proximal map gives \eqref{eq:hsmooth}; see, e.g., \citet{bauschke2017convex}.
\end{proof}

Let
\[
\Delta_m:=\left\{p\in\R_+^m:\sum_{i=1}^m p_i=1\right\}.
\]

\begin{lemma}[Simplex dual]
\label{lem:dual}
For every $y\in\R^m$,
\begin{equation}
\label{eq:dual}
h_{m,\rho}(y)
=
\max_{p\in\Delta_m}
\left\{
\sum_{i=1}^m p_i(y_i-8\rho i)
-\frac\rho4\norm p_2^2
\right\}.
\end{equation}
\end{lemma}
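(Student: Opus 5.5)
To prove \eqref{eq:dual}, we write the max function in variational form and then swap the infimum over $z$ with the maximum over the simplex. Since $g_{m,\rho}$ is a maximum of affine functions,
\[
g_{m,\rho}(z)=\max_{p\in\Delta_m}\sum_{i=1}^m p_i(z_i-8\rho i),
\]
because a linear function on the simplex attains its maximum at a vertex. Substituting into \eqref{eq:hdef} gives
\[
h_{m,\rho}(y)=\inf_{z\in\R^m}\max_{p\in\Delta_m}\Phi(z,p),
\qquad
\Phi(z,p):=\ip{p}{z}-8\rho\sum_i p_i i+\frac1\rho\norm{z-y}_2^2 .
\]

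The function $\Phi$ is convex (in fact strongly convex) and continuous in $z$, and affine in $p$. The simplex $\Delta_m$ is compact and convex. A standard minimax theorem (Sion's, or the convex--concave theorem with one compact factor) therefore lets us interchange the two operations:
\[
h_{m,\rho}(y)=\max_{p\in\Delta_m}\inf_{z}\Phi(z,p).
\]
The outer maximum is attained because the function $p\mapsto\inf_z\Phi(z,p)$ turns out to be continuous and concave, as the next step shows explicitly.

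The inner infimum can be computed in closed form. For fixed $p$, the first-order condition $p+\tfrac2\rho(z-y)=0$ gives the minimizer $z=y-\tfrac\rho2 p$. Plugging this back in yields
\[
\ip p y-\frac\rho2\norm p_2^2+\frac1\rho\cdot\frac{\rho^2}{4}\norm p_2^2-8\rho\sum_i p_i i
=\sum_i p_i(y_i-8\rho i)-\frac\rho4\norm p_2^2 ,
\]
which is exactly the objective in \eqref{eq:dual}.

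The only step that needs justification is the minimax interchange. To avoid appealing to a general theorem, one can verify it directly instead. Weak duality ($\inf\max\ge\max\inf$) is immediate. For the reverse inequality, take a maximizer $p^\star$ of the concave dual, which exists by compactness, and set $z^\star=y-\tfrac\rho2p^\star$. The optimality conditions of the dual say that $p^\star$ is supported on indices maximizing $z^\star_i-8\rho i$. Hence $\max_p\Phi(z^\star,p)=\Phi(z^\star,p^\star)$, so $\inf_z\max_p\Phi\le\max_p\inf_z\Phi$, and the two sides agree.
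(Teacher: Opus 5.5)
Your proof is correct and follows the paper's argument. Like the paper, you write $g_{m,\rho}$ as a maximum over $\Delta_m$, swap the infimum and maximum using compactness of $\Delta_m$ and the convex--affine structure, and evaluate the inner minimum at $z=y-\frac\rho2 p$. Your optional direct check of the interchange is also valid and makes the argument self-contained: the dual optimality conditions force $p^\star$ to be supported on the indices maximizing $z^\star_i-8\rho i$, which gives $\max_p\Phi(z^\star,p)=\Phi(z^\star,p^\star)$.
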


\begin{proof}
Because the maximum of finitely many scalars equals the maximum over their convex combinations,
\[
g_{m,\rho}(z)
=
\max_{p\in\Delta_m}
\sum_{i=1}^m p_i(z_i-8\rho i).
\]
The simplex is compact and convex; the displayed expression plus $\rho^{-1}\norm{z-y}^2$ is affine in $p$ and strongly convex in $z$.  Minimax interchange is therefore valid.  For fixed $p$, the minimizer over $z$ is
\[
z=y-\frac\rho2p,
\]
and the minimum equals
\[
\sum_i p_i(y_i-8\rho i)-\frac\rho4\norm p_2^2.
\]
Maximizing over $p\in\Delta_m$ proves the claim.
\end{proof}

\subsection{Exact shielding}

Define the robust progress index
\begin{equation}
\label{eq:index}
i_\rho^+(y)
:=
\min\left\{
 i\in[m]: |y_k|\le\rho\ \text{for every }k\ge i
\right\},
\end{equation}
with $i_\rho^+(y)=m+1$ if the set is empty.

\begin{lemma}[Exact prefix shielding]
\label{lem:shield}
Fix $j\in[m]$.  If
\begin{equation}
\label{eq:shield-window}
|y_k|\le\rho
\qquad\text{for every }k\ge j,
\end{equation}
then every maximizer of the dual problem \eqref{eq:dual} is supported on
$\{1,\ldots,j\}$, and
\begin{equation}
\label{eq:shield}
h_{m,\rho}(y)
=
h_{m,\rho}(y_1,\ldots,y_j,0,\ldots,0).
\end{equation}
In particular, if $y_j=0$, then
\begin{equation}
\label{eq:shield-zero}
h_{m,\rho}(y)
=
h_{m,\rho}(y_1,\ldots,y_{j-1},0,\ldots,0).
\end{equation}
Consequently, if $i=i_\rho^+(y)\le m$, then the value is exactly
independent of $y_{i+1},\ldots,y_m$.
\end{lemma}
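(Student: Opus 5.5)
The plan is to work entirely with the simplex dual of \cref{lem:dual} and use a mass-transfer argument. Write the dual objective as
\[
\Phi_y(p):=\sum_{i=1}^m p_i(y_i-8\rho i)-\frac\rho4\norm p_2^2 .
\]
It is strictly concave, so the maximizer $p^\star$ is unique; it suffices to show that $p^\star_k=0$ for every $k>j$.

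\textbf{Step 1 (transfer).} Suppose, for contradiction, that $p_k>0$ for some $k>j$. Consider moving mass $t\in(0,p_k]$ from coordinate $k$ to coordinate $j$, that is, $p\mapsto p+t(e_j-e_k)$. The directional derivative of $\Phi_y$ at $t=0^+$ is
\[
(y_j-8\rho j)-(y_k-8\rho k)-\frac\rho2(p_j-p_k)
= y_j-y_k+8\rho(k-j)-\frac\rho2(p_j-p_k).
\]
Both $j$ and $k$ lie in the window, so \eqref{eq:shield-window} gives $y_j-y_k\ge-2\rho$. Since $k-j\ge1$, we get $8\rho(k-j)\ge8\rho$. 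Finally $p_j,p_k\in[0,1]$ gives $-\frac\rho2(p_j-p_k)\ge-\frac\rho2$. The derivative is therefore at least $8\rho-2\rho-\rho/2>0$. The transferred point stays in $\Delta_m$, so this contradicts optimality of $p^\star$. Hence $p^\star$ is supported on $\{1,\ldots,j\}$.

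\textbf{Step 2 (independence from the tail).} Set $y':=(y_1,\ldots,y_j,0,\ldots,0)$. This vector also satisfies \eqref{eq:shield-window}, since its tail is zero and $|y_j|\le\rho$. By Step 1, the dual maximizers for both $y$ and $y'$ are supported on $[j]$. On distributions supported on $[j]$, the objectives $\Phi_y$ and $\Phi_{y'}$ coincide, because they involve only $y_1,\ldots,y_j$. So both dual maxima equal the maximum of the same function over $\{p\in\Delta_m:\operatorname{supp}p\subseteq[j]\}$, and \eqref{eq:shield} follows.

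\textbf{Step 3 (the case $y_j=0$).} If also $y_j=0$, then $(y_1,\ldots,y_j,0,\ldots,0)=(y_1,\ldots,y_{j-1},0,\ldots,0)$, so \eqref{eq:shield} is literally \eqref{eq:shield-zero}.

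\textbf{Step 4 (the index statement).} For $i=i_\rho^+(y)\le m$, the definition \eqref{eq:index} says $|y_k|\le\rho$ for all $k\ge i$. Applying Step 2 with $j=i$ expresses the value through $y_1,\ldots,y_i$ alone, so it does not depend on $y_{i+1},\ldots,y_m$.

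The only delicate point is the sign check in Step 1, in particular bounding the quadratic-penalty term uniformly. The bias $8\rho$ per index is chosen precisely to dominate the combined $2\rho+\rho/2$ loss, so I do not expect an obstacle there. One should just note that $j$ itself must lie in the window; it does, because the condition in \eqref{eq:shield-window} is for $k\ge j$.
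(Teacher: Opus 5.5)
Your proof is correct and follows essentially the same route as the paper. You use the mass-transfer argument in the simplex dual, in which the bias gap $c_j-c_k\ge 6\rho$ dominates the quadratic penalty. You then note that the truncated vector also satisfies the window condition, so both dual problems reduce to the same maximization over distributions supported on $\{1,\ldots,j\}$. The only cosmetic difference is that you bound the directional derivative of an infinitesimal transfer (and note uniqueness via strict concavity), whereas the paper moves the entire mass $p_k$ and computes the exact change $p_k(c_j-c_k)-\tfrac{\rho}{2}p_jp_k$.
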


\begin{proof}
Set $c_k=y_k-8\rho k$.  For every $k>j$, condition
\eqref{eq:shield-window} gives
\[
c_j-c_k
\ge -\rho-8\rho j-(\rho-8\rho k)
=8\rho(k-j)-2\rho
\ge6\rho.
\]
Let $p$ maximize the dual problem \eqref{eq:dual}.  Suppose that
$p_k>0$ for some $k>j$.  Form $\widetilde p\in\Delta_m$ by moving all
mass $p_k$ from coordinate $k$ to coordinate $j$:
\[
\widetilde p_j=p_j+p_k,
\qquad
\widetilde p_k=0,
\qquad
\widetilde p_\ell=p_\ell\quad(\ell\notin\{j,k\}).
\]
The change in the dual objective is
\begin{align*}
&p_k(c_j-c_k)
-\frac\rho4\big[(p_j+p_k)^2-p_j^2-p_k^2\big]\\
&\qquad
=p_k(c_j-c_k)-\frac\rho2p_jp_k
\ge p_k\left(6\rho-\frac\rho2\right)>0,
\end{align*}
a contradiction.  Thus every dual maximizer is supported on
$\{1,\ldots,j\}$.

After replacing $y_{j+1},\ldots,y_m$ by zero, the same argument again
forces every dual maximizer to be supported on $\{1,\ldots,j\}$.  On
that face of the simplex, the two dual objectives coincide, which proves
\eqref{eq:shield}.  If $y_j=0$, the right-hand side of
\eqref{eq:shield} is exactly the vector displayed in
\eqref{eq:shield-zero}.  The final assertion follows by taking
$j=i_\rho^+(y)$.
\end{proof}

\begin{remark}
The proof is global and value-level.  It does not merely show that future affine pieces are inactive near one proximal point; it identifies the support of every optimizer of the exact dual problem.  This distinction is what makes the later fixed-transcript argument rigorous.
\end{remark}

\subsection{A progress-limited gap}

Set
\begin{equation}
\label{eq:R0rho}
R_0:=\frac R4,
\qquad
\rho:=\frac{R_0}{64m^{3/2}}
=\frac{R}{256m^{3/2}},
\end{equation}
and define
\begin{equation}
\label{eq:ycirc}
y^\circ:=-\frac{R_0}{\sqrt m}\1_m.
\end{equation}
Then $\norm{y^\circ}_2=R_0$.

\begin{lemma}[Gap before the final coordinate]
\label{lem:progressgap}
If $i_\rho^+(y)\le m$, then
\begin{equation}
\label{eq:progressgap}
h_{m,\rho}(y)-h_{m,\rho}(y^\circ)
\ge\frac{3R_0}{4\sqrt m}.
\end{equation}
\end{lemma}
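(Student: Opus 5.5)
We bound the two sides separately: an upper bound for $h_{m,\rho}(y^\circ)$ obtained from $g_{m,\rho}$, and a lower bound for $h_{m,\rho}(y)$ obtained from the dual representation of \cref{lem:dual} with a single vertex of the simplex.

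\textbf{Upper bound at the comparison point.} By \eqref{eq:approx}, $h_{m,\rho}(y^\circ)\le g_{m,\rho}(y^\circ)$. Every coordinate of $y^\circ$ equals $-R_0/\sqrt m$, so the biased maximum is attained at $i=1$:
\[
h_{m,\rho}(y^\circ)\le g_{m,\rho}(y^\circ)=-\frac{R_0}{\sqrt m}-8\rho .
\]

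\textbf{Lower bound at $y$.} Put $i=i_\rho^+(y)\le m$. If $i\ge2$, minimality of $i$ in \eqref{eq:index} means index $i-1$ fails the defining property. Since every coordinate $k\ge i$ satisfies $|y_k|\le\rho$, the failure must occur at $k=i-1$ itself, so $|y_{i-1}|>\rho$. This tells us only $|y_{i-1}|$ and not its sign, so we cannot use it directly. Instead we use the last coordinate: because $i\le m$, we have $|y_m|\le\rho$. Choosing $p=e_m$ in \eqref{eq:dual} gives
\[
h_{m,\rho}(y)\ge y_m-8\rho m-\frac\rho4\ge -\rho-8\rho m-\frac\rho4 .
\]
In words, the tail coordinate, which is pinned near zero, forces $h_{m,\rho}(y)$ to be at least about $-8\rho m$.

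\textbf{Combining.} Subtracting the two bounds,
\[
h_{m,\rho}(y)-h_{m,\rho}(y^\circ)\ge \frac{R_0}{\sqrt m}-8\rho(m-1)-\frac{5\rho}{4}.
\]
By \eqref{eq:R0rho}, $\rho=R_0/(64m^{3/2})$, so
\[
8\rho m=\frac{R_0}{8\sqrt m},
\qquad
\frac{5\rho}{4}=\frac{5R_0}{256\,m^{3/2}}\le\frac{5R_0}{256\sqrt m}.
\]
The total loss is therefore at most
\[
\left(\frac18+\frac{5}{256}\right)\frac{R_0}{\sqrt m}<\frac{R_0}{4\sqrt m},
\]
which yields \eqref{eq:progressgap}.

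The only step needing care is the choice of dual vertex. The hypothesis $i_\rho^+(y)\le m$ is used solely to get $|y_m|\le\rho$, which is exactly what lower-bounds $h_{m,\rho}(y)$ through the vertex $e_m$. The constant $64$ in the definition of $\rho$ makes the penalty $8\rho m$ negligible compared with $R_0/\sqrt m$.
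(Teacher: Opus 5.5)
Your proof is correct and is essentially the paper's argument: both bound $h_{m,\rho}(y^\circ)\le g_{m,\rho}(y^\circ)=-R_0/\sqrt m-8\rho$ and use $|y_m|\le\rho$ to get $h_{m,\rho}(y)\ge y_m-8m\rho-\rho/4$. The paper obtains this lower bound from \eqref{eq:approx} instead of the dual vertex $p=e_m$, which gives the identical estimate, and it keeps the $+27\rho/4$ term to reach the slightly sharper $7R_0/(8\sqrt m)$; your remark about $|y_{i-1}|>\rho$ is unnecessary but harmless.
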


\begin{proof}
The increasing bias implies
\[
g_{m,\rho}(y^\circ)
=-\frac{R_0}{\sqrt m}-8\rho.
\]
The condition $i_\rho^+(y)\le m$ implies $|y_m|\le\rho$, and hence
\[
g_{m,\rho}(y)
\ge y_m-8m\rho
\ge-\rho-8m\rho.
\]
Using \eqref{eq:approx},
\begin{align*}
h(y)-h(y^\circ)
&\ge g(y)-\frac\rho4-g(y^\circ)\\
&\ge\frac{R_0}{\sqrt m}-8m\rho+\frac{27\rho}{4}\\
&\ge\frac{R_0}{\sqrt m}-8m\rho.
\end{align*}
By \eqref{eq:R0rho}, $8m\rho=R_0/(8\sqrt m)$.  Thus the left-hand side is at least $7R_0/(8\sqrt m)$, which implies \eqref{eq:progressgap}.
\end{proof}

\section{Avoiding spherical caps in a subspace}
\label{sec:cap}

The delayed rotation requires a direction that is exactly orthogonal to the current block and has small projection on all earlier points.  The following elementary lemma supplies it.

\begin{lemma}[Subspace cap avoidance]
\label{lem:cap}
Let $S\subseteq\R^d$ be a $p$-dimensional subspace and let $a_1,\ldots,a_M\in B_2^d(R)$.  If $0<\rho\le R$ and
\begin{equation}
\label{eq:capcond}
p\rho^2
\ge
64R^2\log\bigl(4(M+1)\bigr),
\end{equation}
then there exists a unit vector $u\in S$ such that
\begin{equation}
\label{eq:capconclusion}
|\ip{u}{a_i}|\le\rho,
\qquad i=1,\ldots,M.
\end{equation}
\end{lemma}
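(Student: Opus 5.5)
This is a probabilistic-method argument: draw $u$ uniformly from the unit sphere of $S$, bound each failure event $\{|\ip{u}{a_i}|>\rho\}$ by a spherical-cap tail, and take a union bound over $i=1,\ldots,M$.

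\textbf{Reduction to a one-dimensional tail.} Fix $i$ and let $P_S$ be the orthogonal projection onto $S$. Since $u\in S$, we have $\ip{u}{a_i}=\ip{u}{P_Sa_i}$, and $\norm{P_Sa_i}_2\le\norm{a_i}_2\le R$. If $P_Sa_i=0$ the event has probability zero. Otherwise write $P_Sa_i=r v$ with $v$ a unit vector in $S$ and $0<r\le R$. Then
\[
\Prob\bigl(|\ip{u}{a_i}|>\rho\bigr)\le\Prob\bigl(|\ip{u}{v}|>\rho/R\bigr).
\]
By rotation invariance within $S\cong\R^p$, the right-hand side depends only on $t:=\rho/R\in(0,1]$.

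\textbf{Cap estimate.} I will use the standard bound $\Prob(|u_1|>t)\le 2e^{-pt^2/2}$ for $u$ uniform on $S^{p-1}$. It can be obtained in either of two ways.
\begin{itemize}
\item Via the cap-volume bound $\sigma(\{u_1>t\})\le e^{-pt^2/2}$.
\item Via a Gaussian representation $u=g/\norm{g}_2$ with $g\sim N(0,I_p)$. Split on the event $\norm{g}_2^2\ge p/2$ and use Laurent--Massart for the chi-square lower tail. This gives $\Prob(|u_1|>t)\le 2e^{-pt^2/4}+e^{-p/16}$.
\end{itemize}
The generous constant $64$ in \eqref{eq:capcond} suggests the authors anticipated a crude bound of the second type, and either form suffices. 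Under \eqref{eq:capcond}, $pt^2\ge 64\log(4(M+1))$. Hence $2e^{-pt^2/4}\le 2(4(M+1))^{-16}$, which is far below $\tfrac{1}{2(M+1)}$. For the chi-square term we also need $p$ itself large: since $t\le1$, \eqref{eq:capcond} gives $p\ge pt^2\ge 64\log(4(M+1))$, so $e^{-p/16}\le(4(M+1))^{-4}$.

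\textbf{Union bound.} Summing over $i=1,\ldots,M$, the total failure probability is strictly less than $M/(M+1)<1$. Hence some unit $u\in S$ satisfies \eqref{eq:capconclusion}.

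\textbf{Main obstacle.} The only real issue is quoting or proving a clean cap-tail inequality with explicit constants valid for all $p\ge1$, including small $p$. Small $p$ is harmless, because \eqref{eq:capcond} together with $\rho\le R$ forces $p\ge 64\log 8$. I would present the Gaussian-normalization route so that the proof is self-contained apart from the chi-square lower-tail bound $\Prob(\norm{g}_2^2\le p-2\sqrt{px})\le e^{-x}$.
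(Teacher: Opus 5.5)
Your proof is correct and takes essentially the same route as the paper. Both write $u=g/\norm{g}_2$ with $g$ Gaussian in $S$, split on $\norm{g}_2^2\ge p/2$, combine a Gaussian tail $2e^{-p\rho^2/(4R^2)}$ with the chi-square lower tail $e^{-p/16}$, and take a union bound over the $M$ points. The only cosmetic difference is that the paper uses $\rho\le R$ to merge the two terms into $3\exp(-p\rho^2/(16R^2))$, whereas you bound $e^{-p/16}$ separately via $p\ge p\rho^2/R^2$, which is the same observation.
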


\begin{proof}
The case $M=0$ is immediate.  Let $g$ be a standard Gaussian vector in $S\cong\R^p$ and put $u=g/\norm g_2$.  For fixed $a_i$, only $P_Sa_i$ matters.  On the event $\norm g_2\ge\sqrt{p/2}$, the implication
\[
|\ip{u}{a_i}|>\rho
\quad\Longrightarrow\quad
\left|\ip{g}{\frac{P_Sa_i}{\norm{P_Sa_i}_2}}\right|
>\frac{\rho}{R}\sqrt{\frac p2}
\]
holds whenever $P_Sa_i\ne0$.  A Gaussian tail bound and the standard lower-tail estimate for a chi-square variable give
\[
\Prob\{|\ip{u}{a_i}|>\rho\}
\le
2\exp\left(-\frac{p\rho^2}{4R^2}\right)
+\exp\left(-\frac p{16}\right)
\le
3\exp\left(-\frac{p\rho^2}{16R^2}\right),
\]
where the last inequality uses $\rho\le R$.  A union bound yields
\[
\Prob\left\{\max_{1\le i\le M}|\ip{u}{a_i}|>\rho\right\}
\le
3M\exp\left(-\frac{p\rho^2}{16R^2}\right)<1
\]
under \eqref{eq:capcond}.  Therefore a desired unit vector exists.  See \citet{vershynin2018high} for standard Gaussian and spherical concentration estimates.
\end{proof}

\begin{corollary}[A convenient parameter condition]
\label{cor:capparameter}
There is a universal $c_0>0$ such that the following holds.  Suppose $d\ge8$, $m\le d/4$, $M\le d^2/16+1$, $p\ge d/2$, and $\rho$ is given by \eqref{eq:R0rho}.  If
\[
m^3\log(ed)\le c_0d,
\]
then \eqref{eq:capcond} holds.  One may take, for example, $c_0=2^{-24}$ after increasing the harmless dimension threshold.
\end{corollary}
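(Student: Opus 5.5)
The plan is to verify \eqref{eq:capcond} by direct substitution. I will bound its left-hand side from below using $p\ge d/2$ and the explicit formula for $\rho$, and bound its right-hand side from above using $M\le d^2/16+1$. I will then check that the resulting sufficient inequality is exactly of the form $m^3\log(ed)\le c_0 d$. The side condition $0<\rho\le R$ required by \cref{lem:cap} is immediate from \eqref{eq:R0rho}, since $\rho=R/(256m^{3/2})\le R$ for $m\ge1$.

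\emph{Left-hand side.} By \eqref{eq:R0rho}, $\rho^2=R^2/(2^{16}m^3)$. Combined with $p\ge d/2$ this gives
\[
\frac{p\rho^2}{R^2}\ \ge\ \frac{d}{2^{17}m^3}.
\]

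\emph{Right-hand side.} From $M\le d^2/16+1$ we get $4(M+1)\le d^2/4+8$. For $d\ge 8$ (indeed already for $d\ge4$) one has $d^2/4+8\le d^2$, because $3d^2/4\ge 8$. Hence
\[
\log\bigl(4(M+1)\bigr)\le 2\log d\le 2\log(ed),
\]
and so $64R^2\log(4(M+1))\le 128R^2\log(ed)$.

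\emph{Conclusion.} It therefore suffices that $d/(2^{17}m^3)\ge 2^7\log(ed)$, that is, $m^3\log(ed)\le 2^{-24}d$. This is exactly the assumed condition with $c_0=2^{-24}$, matching the value stated in the corollary.

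There is no real obstacle; the only care needed is the elementary estimate $d^2/4+8\le d^2$, which is where the threshold on $d$ enters. The hypothesis $m\le d/4$ is not needed for this inequality itself; it only records the regime in which the corollary will be applied. If one prefers a cleaner constant, one can raise the dimension threshold so that $\log(d^2/4+8)\le 2\log(ed)$ holds with room to spare. This is the ``harmless dimension threshold'' mentioned in the statement.
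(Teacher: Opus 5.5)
Your proposal is correct and follows the paper's proof essentially line by line. It uses the same lower bound $p\rho^2/R^2\ge d/(2^{17}m^3)$ and the same estimate $\log(4(M+1))\le 2\log(ed)$ for $d\ge 8$, which yields $c_0=2^{-24}$ with no further threshold needed. Your explicit check of the side condition $\rho\le R$ is a small bonus that the paper leaves implicit.
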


\begin{proof}
Equation \eqref{eq:R0rho} gives
\[
\frac{\rho^2}{R^2}=\frac{1}{2^{16}m^3},
\qquad
\frac{p\rho^2}{R^2}\ge\frac{d}{2^{17}m^3}.
\]
For $d\ge8$ and $M\le d^2/16+1$,
\[
\log(4(M+1))\le2\log(ed).
\]
The conclusion follows from \eqref{eq:capcond} with the displayed conservative value of $c_0$.
\end{proof}

\section{The batched delayed-rotation compiler}
\label{sec:compiler}

Fix a deterministic adaptive algorithm and a total budget $T$.  Let
\begin{equation}
\label{eq:blocksize}
b:=\left\lfloor\frac d4\right\rfloor,
\qquad
T=qb+s,
\qquad
0\le s<b.
\end{equation}
Assume
\begin{equation}
\label{eq:Tless}
T<bm,
\end{equation}
so $0\le q\le m-1$.  We construct orthonormal directions
\[
u_1,\ldots,u_m\in\R^d,
\qquad
U=(u_1,\ldots,u_m)\in\R^{d\times m}.
\]
Define
\begin{equation}
\label{eq:Aeta}
A:=\frac{\beta_0\rho}{8},
\qquad
\eta:=\frac{\beta_0}{4096m^2},
\end{equation}
and the direction-independent term
\begin{equation}
\label{eq:q0}
q_0(x)
:=
\frac{\beta_0}{8}\dist\bigl(x,B_2^d(R_0)\bigr)^2
+\frac\eta2\norm{x}_2^2.
\end{equation}

\subsection{Complete blocks}

Suppose that $u_1,\ldots,u_{j-1}$ have been selected before complete block $j$.  Every query $x$ in this block receives the response
\begin{equation}
\label{eq:fullresponse}
r(x)
:=
A h_{m,\rho}\bigl(
\ip{u_1}{x},\ldots,\ip{u_{j-1}}{x},0,\ldots,0
\bigr)+q_0(x).
\end{equation}
The algorithm may adapt within the block, but \eqref{eq:fullresponse} does not depend on $u_j$.  Consequently all points
\[
x_{j,1},\ldots,x_{j,b}
\]
are determined before $u_j$ is selected.

Set
\begin{equation}
\label{eq:Sj}
S_j
:=
\Span\{u_1,\ldots,u_{j-1},x_{j,1},\ldots,x_{j,b}\}^{\perp}.
\end{equation}
Since $j\le q\le m-1$ and $m\le d/4$,
\begin{equation}
\label{eq:Sjdim}
\dim S_j
\ge d-(j-1)-b
\ge\frac d2.
\end{equation}
Apply \cref{lem:cap} inside $S_j$ to all queries from earlier blocks and select a unit $u_j\in S_j$ such that
\begin{align}
|\ip{u_j}{x_t}|&\le\rho
&&\text{for every earlier query }x_t,
\label{eq:oldsmall}\\
\ip{u_j}{x_{j,\ell}}&=0
&&\text{for }\ell=1,\ldots,b.
\label{eq:currentzero}
\end{align}
The second relation is exact because $u_j\in S_j$.

\subsection{The incomplete block, the output, and completion}

After the $q$ complete blocks, answer the remaining $s$ queries by
\begin{equation}
\label{eq:lastresponse}
r(x)
:=
A h_{m,\rho}\bigl(
\ip{u_1}{x},\ldots,\ip{u_q}{x},0,\ldots,0
\bigr)+q_0(x).
\end{equation}
Once all responses are returned, the deterministic terminal output $\widehat x$ is fixed.  Select $u_{q+1}$ in
\[
\Span\{u_1,\ldots,u_q,x_{qb+1},\ldots,x_T,\widehat x\}^{\perp}
\]
so that it also has projection at most $\rho$ on every query from the complete blocks.  This is possible by \cref{lem:cap}: the displayed orthogonal complement has dimension at least $d/2$, because $q\le m-1$, $s+1\le b$, and $m,b\le d/4$.  Thus
\begin{align}
\ip{u_{q+1}}{x_t}&=0,
&&t=qb+1,\ldots,T,
\label{eq:sentquery}\\
\ip{u_{q+1}}{\widehat x}&=0,
\label{eq:sentout}\\
|\ip{u_{q+1}}{x_t}|&\le\rho,
&&t\le qb.
\label{eq:sentold}
\end{align}

Finally, for $k=q+2,\ldots,m$, select $u_k$ successively in
\[
\Span\{u_1,\ldots,u_{k-1}\}^{\perp}
\]
so that
\begin{equation}
\label{eq:futuresmall}
|\ip{u_k}{x_t}|\le\rho
\quad(t=1,\ldots,T),
\qquad
|\ip{u_k}{\widehat x}|\le\rho.
\end{equation}
The available subspace has dimension at least $d-m+1\ge3d/4$.  Every cap-avoidance step controls at most $T+1\le d^2/16+1$ vectors under \eqref{eq:Tless} and $m\le d/4$.  Hence \cref{cor:capparameter} validates all selections.

\begin{remark}[Fixed horizon and early stopping]
A pathwise algorithm that stops in fewer than $T$ calls may be padded by ignored repeated queries.  Therefore it is enough to prove the lower bound for fixed horizons.  The construction above never changes a response after it has been returned.
\end{remark}

\section{One fixed objective realizes the transcript}
\label{sec:fixed}

Once the frame $U$ is complete, define
\begin{equation}
\label{eq:FU}
F_U(x)
:=
A h_{m,\rho}(U^\top x)
+
\frac{\beta_0}{8}\dist\bigl(x,B_2^d(R_0)\bigr)^2
+
\frac\eta2\norm{x}_2^2.
\end{equation}

\begin{proposition}[Exact fixed-objective consistency]
\label{prop:transcript}
Every response generated in \cref{sec:compiler} equals the corresponding value of the single function \eqref{eq:FU}:
\[
r_t=F_U(x_t),
\qquad t=1,\ldots,T.
\]
Consequently, the query sequence and terminal output generated by the online construction are exactly the query sequence and output produced when the algorithm interacts with the fixed objective $F_U$.
\end{proposition}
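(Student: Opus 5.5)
Since $q_0$ is identical in the online responses and in $F_U$, the plan reduces to showing
\[
h_{m,\rho}(U^\top x_t)=h_{m,\rho}\bigl(\ip{u_1}{x_t},\ldots,\ip{u_{j-1}}{x_t},0,\ldots,0\bigr)
\]
for every query $x_t$ answered in block $j$. Here $j-1$ is the number of directions already selected when $x_t$ was answered: $j-1$ for complete block $j$, and $q$ for the incomplete block. The tool is \cref{lem:shield} in its form \eqref{eq:shield-zero}. Applied with index $j$, it says the value depends only on $y_1,\ldots,y_{j-1}$, provided $y_j=0$ and $|y_k|\le\rho$ for all $k>j$.

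For a query $x_t=x_{j,\ell}$ in complete block $j\le q$, I set $y=U^\top x_t$ and check the two hypotheses.
\begin{itemize}
\item \textbf{Zero at $j$.} $y_j=\ip{u_j}{x_{j,\ell}}=0$ by \eqref{eq:currentzero}.
\item \textbf{Small tail, $k>j$.} If $j<k\le q$, then $x_t$ lies in a block earlier than $k$, so $|y_k|\le\rho$ by \eqref{eq:oldsmall} applied at step $k$. If $k=q+1$, then $x_t$ is among the complete-block queries, so \eqref{eq:sentold} gives $|y_{q+1}|\le\rho$. If $k\ge q+2$, \eqref{eq:futuresmall} gives $|y_k|\le\rho$.
\end{itemize}
Hence \eqref{eq:shield-zero} with index $j$ shows that $Ah_{m,\rho}(U^\top x_t)$ equals the response \eqref{eq:fullresponse}.

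For a query in the incomplete block, $t>qb$, I apply the lemma with index $q+1\le m$ (this uses $q\le m-1$). The hypotheses hold as follows: $y_{q+1}=0$ by \eqref{eq:sentquery}, and $|y_k|\le\rho$ for $k\ge q+2$ by \eqref{eq:futuresmall}. This matches \eqref{eq:lastresponse}.

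Having shown $r_t=F_U(x_t)$ for all $t$, the transcript claim follows by induction on $t$. The algorithm is deterministic, and $x_t$ is a Borel function of the prior pairs $(x_s,r_s)$. Suppose the online pairs for $s<t$ agree with those of the run against $F_U$. Then both runs choose the same $x_t$, and the reply agrees because $r_t=F_U(x_t)$. The outputs $\widehat x=Q_{\mathrm{out}}(\cdots)$ then agree as well.

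The main obstacle is the index bookkeeping, in particular making sure the small-projection conditions cover every later direction. Each complete-block query must be controlled by every $u_k$ with $k>j$, including the sentinel $u_{q+1}$, which is why \eqref{eq:sentold} is needed. It also matters that the lemma requires only $|y_k|\le\rho$, not exact zeros, in the tail. One more subtlety is that $u_j$ depends on queries adapted to earlier responses. This causes no circularity: responses in block $j$ use only $u_1,\ldots,u_{j-1}$, and every later direction is chosen after those queries are fixed, so each constraint refers to already-determined points.
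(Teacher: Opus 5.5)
Your proposal is correct and follows the paper's proof. You apply the shielding identity \eqref{eq:shield-zero} at index $j$ for complete-block queries and at index $q+1$ for incomplete-block queries, then conclude by induction over time using determinism. Your three-way split of the tail ($j<k\le q$ via \eqref{eq:oldsmall}, $k=q+1$ via \eqref{eq:sentold}, $k\ge q+2$ via \eqref{eq:futuresmall}) just spells out the paper's phrase ``either during a later complete block or during final completion.''
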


\begin{proof}
Consider $x=x_{j,\ell}$ in complete block $j$ and write $y=U^\top x$.  By \eqref{eq:currentzero}, $y_j=0$.  Every direction $u_k$ with $k>j$ is selected after this query and is required, either during a later complete block or during final completion, to satisfy $|y_k|\le\rho$.  Thus \eqref{eq:shield-window} holds at the fixed prefix $j$, and \eqref{eq:shield-zero} gives
\[
h_{m,\rho}(U^\top x)
=
h_{m,\rho}\bigl(
\ip{u_1}{x},\ldots,\ip{u_{j-1}}{x},0,\ldots,0
\bigr),
\]
which is precisely the hard-core term in \eqref{eq:fullresponse}.

For a query in the incomplete block, \eqref{eq:sentquery} gives a zero at coordinate $q+1$, while \eqref{eq:futuresmall} controls every later coordinate.  Applying \eqref{eq:shield-zero} with $j=q+1$ turns \eqref{eq:FU} into \eqref{eq:lastresponse}.

Finally, use induction over time.  If all earlier replies agree with $F_U$, determinism gives the same current query; the preceding argument gives the same current reply.  Hence the complete transcript and the terminal output agree with the true interaction with $F_U$.
\end{proof}

The terminal completion also gives
\begin{equation}
\label{eq:outputprogress}
i_\rho^+(U^\top\widehat x)
\le q+1\le m,
\end{equation}
because coordinate $q+1$ is zero by \eqref{eq:sentout} and every later coordinate has magnitude at most $\rho$ by \eqref{eq:futuresmall}.

\section{Admissibility of the hard objective}
\label{sec:admissibility}
\label{sec:admissible}

\begin{proposition}[Convexity, smoothness, and exact strong-convexity modulus]
\label{prop:smooth}
The function $F_U$ is convex and continuously differentiable.  Moreover,
\[
\Lip(\nabla F_U)<\beta_0,
\qquad
\mu(F_U)=\eta=\frac{\beta_0}{4096m^2},
\]
where $\mu(F_U)$ denotes the largest strong-convexity modulus of $F_U$.
In particular, $F_U$ has a unique global minimizer.
\end{proposition}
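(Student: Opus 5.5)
The plan is to handle the three summands of \eqref{eq:FU} separately and then add their properties.

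\emph{Convexity and differentiability.} By \cref{lem:moreau}, $h_{m,\rho}$ is convex and $C^1$, and precomposition with the linear map $U^\top$ keeps both properties. The function $\phi(x)=\tfrac12\dist(x,B_2^d(R_0))^2$ is the Moreau envelope of the indicator of the ball. It is therefore convex and $C^1$ with $\nabla\phi(x)=x-\Pi_{B_2^d(R_0)}(x)$, and this gradient is $1$-Lipschitz because $I-\Pi$ is firmly nonexpansive. The last summand $\tfrac\eta2\norm x_2^2$ is smooth and $\eta$-strongly convex. Summing gives convexity and differentiability of $F_U$.

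\emph{Smoothness constant.} Since $U$ has orthonormal columns, $\norm{U}_{\mathrm{op}}=1$. Hence
\[
\Lip\bigl(\nabla[Ah_{m,\rho}(U^\top\cdot)]\bigr)\le A\cdot\frac2\rho=\frac{\beta_0}{4},
\]
using \eqref{eq:hsmooth} and $A=\beta_0\rho/8$. The distance term contributes $\tfrac{\beta_0}{8}\cdot 2\cdot 1=\beta_0/4$, and the quadratic contributes $\eta=\beta_0/(4096m^2)\le\beta_0/4096$. The total is at most $\beta_0(\tfrac12+2^{-12})<\beta_0$.

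\emph{Exact strong-convexity modulus.} For the lower bound, $F_U-\tfrac\eta2\norm{\cdot}^2$ is convex, so $F_U$ is $\eta$-strongly convex. For the upper bound, I need to exhibit a direction in which $F_U$ is exactly quadratic with curvature $\eta$ near some point; this is the step needing care.

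\begin{itemize}
\item Since $m\le d/4<d$, pick a unit vector $v\perp\Span\{u_1,\ldots,u_m\}$. Along $x=tv$ with $|t|\le R_0$, we have $U^\top x=0$, so the $h$-term is constant. Also $\dist(x,B_2^d(R_0))=0$ there.
\item Hence $F_U(tv)=Ah_{m,\rho}(0)+\tfrac\eta2t^2$ on $[-R_0,R_0]$.
\item If $F_U$ were $\mu'$-strongly convex with $\mu'>\eta$, then its restriction to this segment would be $\mu'$-strongly convex. That is impossible for a quadratic with second derivative $\eta$; for example, test \eqref{eq:sc} at $x=0$, $y=tv$ with small $t$.
\end{itemize}

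So $\mu(F_U)=\eta$ exactly. The subtle point is only to make sure the chosen segment avoids both the $h$-part, which orthogonality to $U$ handles, and the distance part, which staying inside $B_2^d(R_0)$ handles.

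\emph{Uniqueness of the minimizer.} A continuous, $\eta$-strongly convex function on $\R^d$ is coercive. It therefore attains its minimum, and strict convexity makes the minimizer unique.
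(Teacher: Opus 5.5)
Your proof is correct and follows the paper's argument essentially step for step: the same term-by-term smoothness bound $\beta_0/4+\beta_0/4+\eta<\beta_0$, coercivity plus strict convexity for the unique minimizer, and the same witness direction $v\in\operatorname{range}(U)^\perp$ on the segment $|t|\le R_0$, where $F_U(tv)=Ah_{m,\rho}(0)+\tfrac\eta2t^2$. The only cosmetic difference is in ruling out a modulus $\mu'>\eta$: the paper uses the midpoint inequality at $\pm tv$, while you use the first-order inequality at $x=0$, $y=tv$. These are equivalent, and yours is valid because $\ip{\nabla F_U(0)}{v}=0$.
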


\begin{proof}
Each term in \eqref{eq:FU} is convex and continuously differentiable.  Since $U^\top U=I_m$, \eqref{eq:hsmooth} gives
\[
\Lip\bigl(\nabla[A h_{m,\rho}(U^\top\cdot)]\bigr)
\le A\frac2\rho
=\frac{\beta_0}{4}.
\]
For a nonempty closed convex set $C$, the function $x\mapsto\frac12\dist(x,C)^2$ has gradient $x-\Pi_Cx$, which is $1$-Lipschitz.  Therefore the second term in \eqref{eq:FU} has gradient-Lipschitz constant $\beta_0/4$.  The final quadratic has gradient-Lipschitz constant $\eta$.  Thus
\[
\Lip(\nabla F_U)
\le\frac{\beta_0}{4}+\frac{\beta_0}{4}+\eta<\beta_0.
\]
The quadratic term makes $F_U$ at least $\eta$-strongly convex and coercive, so a unique minimizer exists.

It remains to show that the modulus is not larger.  Since $m<d$, choose a unit vector $v\in\operatorname{range}(U)^\perp$.  For every $|t|\le R_0$,
\[
U^\top(tv)=0,
\qquad
\dist(tv,B_2^d(R_0))=0,
\]
and hence
\[
F_U(tv)=A h_{m,\rho}(0)+\frac\eta2t^2.
\]
If $F_U$ were $\mu$-strongly convex for some $\mu>\eta$, the midpoint form of strong convexity applied to $tv$ and $-tv$ would imply
\[
F_U(tv)+F_U(-tv)-2F_U(0)\ge\mu t^2.
\]
The left-hand side equals $\eta t^2$, a contradiction for $t\ne0$.  Therefore $\mu(F_U)=\eta$.
\end{proof}

\begin{proposition}[The minimizer lies in the interior promise]
\label{prop:minloc}
Let $x_U^\star$ be the unique minimizer of $F_U$.  Then
\[
\norm{x_U^\star}_2<\frac R2.
\]
\end{proposition}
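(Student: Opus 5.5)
The proof is a first-order optimality argument at the minimizer. The idea is that the direction-dependent chain term has uniformly bounded gradient, so it can only push the minimizer a tiny distance outside $B_2^d(R_0)$ against the penalty $\frac{\beta_0}{8}\dist(x,B_2^d(R_0))^2$. Since $R_0=R/4$ leaves a margin of $R/4$, this is enough.

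\textbf{Setup.} Let $x^\star=x_U^\star$. By \cref{prop:smooth}, $F_U$ is continuously differentiable with a unique minimizer, so $\nabla F_U(x^\star)=0$. If $\norm{x^\star}_2\le R_0=R/4$ there is nothing to prove, so I assume $r:=\norm{x^\star}_2>R_0$ and set $e:=x^\star/r$.

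\textbf{Gradient of each term along $e$.}
\begin{enumerate}[label=(\roman*)]
\item \emph{Chain term.} The chain rule and \eqref{eq:hgrad}, together with $U^\top U=I_m$ (so $\norm{U}_{\mathrm{op}}\le1$), give
\[
\norm{\nabla[A h_{m,\rho}(U^\top\cdot)](x)}_2=\norm{A\,U\nabla h_{m,\rho}(U^\top x)}_2\le A=\frac{\beta_0\rho}{8}.
\]
\item \emph{Distance term.} As in the proof of \cref{prop:smooth}, its gradient is $\frac{\beta_0}{4}(x-\Pi_{B_2^d(R_0)}x)$. For $\norm x_2>R_0$ this equals $\frac{\beta_0}{4}(r-R_0)e$.
\item \emph{Quadratic term.} Its gradient is $\eta x^\star=\eta r e$.
\end{enumerate}

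\textbf{Pairing with $e$.} Taking the inner product of $0=\nabla F_U(x^\star)$ with $e$ yields
\[
0=\ip{\nabla F_U(x^\star)}{e}\ge \frac{\beta_0}{4}(r-R_0)+\eta r-A>\frac{\beta_0}{4}(r-R_0)-\frac{\beta_0\rho}{8}.
\]
Hence
\[
r<R_0+\frac{\rho}{2}=\frac R4+\frac{R}{512m^{3/2}}\le\frac R4+\frac R{512}<\frac R2,
\]
using \eqref{eq:R0rho} and $m\ge1$.

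\textbf{Main obstacle.} The only delicate point is justifying the uniform bound $A$ on the chain-term gradient independently of $U$. This follows directly from \eqref{eq:hgrad} and the isometry property of $U$, so I expect no real difficulty. The constants leave a wide margin.
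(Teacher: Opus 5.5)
Your proposal is correct and follows essentially the same route as the paper. Both take the first-order condition at the minimizer, pair it with the radial unit vector, and use the uniform bound $A$ on the chain-term gradient to get $r\le R_0+\rho/2<R/2$. The only differences are cosmetic: you obtain strict inequality via $\eta r>0$, and you bound $\rho/2\le R/512$ directly rather than through $\rho\le R_0/64$.
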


\begin{proof}
The hard-core gradient has norm at most $A$ by \eqref{eq:hgrad} and the isometry of $U$.  Write $r=\norm{x_U^\star}_2$.  If $r\le R_0$, there is nothing to prove.  Otherwise let $\nu=x_U^\star/r$.  Outside $B_2^d(R_0)$, the gradient of the distance term is
\[
\frac{\beta_0}{4}(r-R_0)\nu.
\]
Taking the inner product of the first-order condition $\nabla F_U(x_U^\star)=0$ with $\nu$ yields
\[
0
\ge
-A+\frac{\beta_0}{4}(r-R_0)+\eta r.
\]
Therefore
\[
r\le R_0+\frac{4A}{\beta_0}=R_0+\frac\rho2.
\]
Since $R_0=R/4$ and $\rho\le R_0/64$,
\[
r\le R_0+\frac{R_0}{128}<\frac R2.
\]
\end{proof}

\begin{remark}
The distance-squared term is used only to keep the global minimizer in the promised interior ball.  The final small quadratic supplies uniqueness and strong convexity.  Neither term depends on the hidden frame.
\end{remark}

\section{Terminal error and the integer theorem}
\label{sec:gap}

Define the comparison point
\begin{equation}
\label{eq:xcirc}
x^\circ:=Uy^\circ.
\end{equation}
Because $U$ is an isometry on $\R^m$,
\[
\norm{x^\circ}_2=\norm{y^\circ}_2=R_0,
\qquad
U^\top x^\circ=y^\circ.
\]
Thus the distance penalty vanishes at $x^\circ$, and $x^\circ\in X$.

\begin{proposition}[Output gap]
\label{prop:outputgap}
The fixed objective and terminal point constructed above satisfy
\begin{equation}
\label{eq:outputgap}
F_U(\widehat x)-F_U^\star
\ge
\frac{11}{2^{17}}\frac{\beta_0 R^2}{m^2}
\ge
2^{-17}\frac{\beta_0 R^2}{m^2}.
\end{equation}
\end{proposition}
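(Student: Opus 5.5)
The plan is to compare the terminal value $F_U(\widehat x)$ with the value of $F_U$ at the comparison point $x^\circ=Uy^\circ$ from \eqref{eq:xcirc}, which upper-bounds $F_U^\star$. The hard-core term $Ah_{m,\rho}$ supplies a gap of order $A R_0/\sqrt m$. The two frame-independent terms contribute a nonnegative amount at $\widehat x$ and a small explicit amount at $x^\circ$.

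\emph{Upper bound on $F_U^\star$.} Since $F_U^\star\le F_U(x^\circ)$, and since $\norm{x^\circ}_2=R_0$ and $U^\top x^\circ=y^\circ$, the distance penalty vanishes at $x^\circ$. This gives
\[
F_U^\star\le A h_{m,\rho}(y^\circ)+\frac\eta2R_0^2 .
\]

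\emph{Lower bound on $F_U(\widehat x)$.} I would drop the nonnegative terms $\frac{\beta_0}{8}\dist(\widehat x,B_2^d(R_0))^2$ and $\frac\eta2\norm{\widehat x}_2^2$ from \eqref{eq:FU}, which leaves
\[
F_U(\widehat x)\ge A h_{m,\rho}(U^\top\widehat x).
\]
The terminal completion gives $i_\rho^+(U^\top\widehat x)\le m$ by \eqref{eq:outputprogress}. Lemma~\ref{lem:progressgap} then yields
\[
A\bigl[h_{m,\rho}(U^\top\widehat x)-h_{m,\rho}(y^\circ)\bigr]\ge \frac{3AR_0}{4\sqrt m}.
\]

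\emph{Arithmetic.} With $A=\beta_0\rho/8$ and $\rho=R/(256m^{3/2})$ we have $A=\beta_0R/(2^{11}m^{3/2})$. Using $R_0=R/4$,
\[
\frac{3AR_0}{4\sqrt m}=\frac{3\beta_0R^2}{2^{15}m^2}=\frac{12}{2^{17}}\frac{\beta_0R^2}{m^2}.
\]
The strong-convexity loss at $x^\circ$ is
\[
\frac\eta2R_0^2=\frac{\beta_0}{2\cdot 2^{12}m^2}\cdot\frac{R^2}{16}=\frac{1}{2^{17}}\frac{\beta_0R^2}{m^2}.
\]
Subtracting the second quantity from the first leaves $\frac{11}{2^{17}}\beta_0R^2/m^2$, which proves \eqref{eq:outputgap}.

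There is no real obstacle in this argument. The only point needing care is that \eqref{eq:outputprogress} uses the sentinel orthogonality \eqref{eq:sentout} together with \eqref{eq:futuresmall} for the output itself, not only for the queries. With that in hand, Lemma~\ref{lem:progressgap} applies to $U^\top\widehat x$ directly. The remaining step is to check that the $\eta$-term at $x^\circ$ is small relative to the chain gap, which the constant $4096$ in $\eta$ ensures.
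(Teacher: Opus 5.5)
Your proposal is correct and follows the paper's proof essentially line by line. Like the paper, you bound $F_U^\star$ by $F_U(x^\circ)=Ah_{m,\rho}(y^\circ)+\frac\eta2R_0^2$, drop the nonnegative radial and quadratic terms at $\widehat x$, and apply \eqref{eq:outputprogress} with Lemma~\ref{lem:progressgap}. The arithmetic $\frac{12}{2^{17}}-\frac{1}{2^{17}}=\frac{11}{2^{17}}$ is also the same; the paper just carries it out in terms of $R_0$ before substituting $R_0=R/4$.
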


\begin{proof}
By \eqref{eq:outputprogress} and \cref{lem:progressgap},
\[
h_{m,\rho}(U^\top\widehat x)-h_{m,\rho}(y^\circ)
\ge\frac{3R_0}{4\sqrt m}.
\]
Since $F_U^\star\le F_U(x^\circ)$, and the radial and quadratic terms at $\widehat x$ are nonnegative,
\begin{align*}
F_U(\widehat x)-F_U^\star
&\ge F_U(\widehat x)-F_U(x^\circ)\\
&\ge
A\frac{3R_0}{4\sqrt m}-\frac\eta2R_0^2.
\end{align*}
Using \eqref{eq:R0rho} and \eqref{eq:Aeta},
\[
A\frac{3R_0}{4\sqrt m}
=
\frac{3\beta_0 R_0^2}{2048m^2},
\qquad
\frac\eta2R_0^2
=
\frac{\beta_0 R_0^2}{8192m^2}.
\]
Their difference is
\[
\frac{11\beta_0 R_0^2}{8192m^2}
=
\frac{11}{2^{17}}\frac{\beta_0 R^2}{m^2},
\]
which proves the proposition.
\end{proof}

\begin{proof}[Proof of \cref{thm:compiler}]
For $d\ge8$,
\[
b=\left\lfloor\frac d4\right\rfloor\ge\frac d8.
\]
Thus $T<dm/8$ implies $T<bm$, so the construction in \cref{sec:compiler} applies.  The condition $m^3\log(ed)\le c_0d$ validates every cap-avoidance step by \cref{cor:capparameter}.  Exact fixed-objective consistency follows from \cref{prop:transcript}; membership in $\cG_{\beta_0}(d,R)$ follows from \cref{prop:smooth,prop:minloc}; and the terminal error is \eqref{eq:outputgap}.  The exact strong-convexity modulus is the value of $\eta$ in \eqref{eq:Aeta}, by \cref{prop:smooth}.
\end{proof}

\section{Calibration to fixed \texorpdfstring{$(\mu,\beta,\epsilon)$}{(mu,beta,epsilon)}}
\label{sec:calibration}
\label{sec:rate}

The hard compiler uses an auxiliary smoothness parameter $\beta_0$ and produces exact strong convexity $\beta_0/(4096m^2)$.  We now choose these quantities so that the final objective belongs to the public class $\cF_{\mu,\beta}(d,R)$.

\begin{lemma}[A hidden-shift quadratic]
\label{lem:quadratic}
Assume $0<\epsilon\le\beta R^2/64$.  Every deterministic adaptive algorithm using at most $d-2$ value calls admits a hard function in $\cF_{\mu,\beta}(d,R)$ whose terminal error is at least $2\epsilon$.
\end{lemma}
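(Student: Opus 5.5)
The plan is a resisting-oracle argument with a one-parameter family of isotropic quadratics whose minimizer is hidden in a direction the algorithm never probes. Fix $\lambda\in[\mu,\beta]$. I take $\lambda=\beta$, which suffices here because a $\beta$-strongly convex function is also $\mu$-strongly convex. For a vector $a\in\cC$ consider
\[
f_a(x):=\frac\lambda2\norm{x-a}_2^2-\frac\lambda2\norm a_2^2=\frac\lambda2\norm x_2^2-\lambda\ip a x .
\]
Each $f_a$ is $\lambda$-strongly convex and $\lambda$-smooth, so the estimates \eqref{eq:sc}--\eqref{eq:smooth} hold with $(\mu,\beta)$. Its unique minimizer is $a\in\cC$. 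Hence $f_a\in\cF_{\mu,\beta}(d,R)$ whenever $\norm a_2\le R/2$. The constant shift $-\frac\lambda2\norm a_2^2$ is the key device: it makes $f_a(x)=\frac\lambda2\norm x_2^2$ for every $x\perp a$, so such queries reveal nothing about $a$.

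First I run the algorithm against the direction-free oracle that answers every query $x_t$ with $r_t:=\frac\lambda2\norm{x_t}_2^2$. Since the algorithm is deterministic, this fixes the queries $x_1,\ldots,x_T$ with $T\le d-2$, and also the terminal output $\widehat x$. The span of $\{x_1,\ldots,x_T,\widehat x\}$ has dimension at most $d-1$, so I can pick a unit vector $v$ orthogonal to all of these points. Then I set $a:=\frac R2v\in\cC$.

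Next I check consistency with a fixed objective, in the spirit of \cref{prop:transcript}. Because $\ip a{x_t}=0$ for every $t$, we have $f_a(x_t)=r_t$ for every $t$. Induction over time then shows that the true interaction with $f_a$ reproduces the same queries and the same output $\widehat x$: if all earlier replies agree, determinism gives the same next query, and that query again gets the same reply.

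Finally, the error bound. Since $\ip a{\widehat x}=0$,
\[
f_a(\widehat x)-f_a^\star=\frac\lambda2\norm{\widehat x-a}_2^2=\frac\lambda2\bigl(\norm{\widehat x}_2^2+\norm a_2^2\bigr)\ge\frac{\lambda R^2}{8}.
\]
With $\lambda=\beta$ and $\epsilon\le\beta R^2/64$, this gap is at least $8\epsilon\ge2\epsilon$.

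The only delicate point is the second step. The hidden shift must be orthogonal to the output as well as to the queries, and the class constraint $\norm a_2\le R/2$ must still hold. The dimension count $T+1\le d-1$ handles the first requirement, and it is exactly why the budget is $d-2$ calls. Placing $a$ on the boundary of the closed ball $\cC$ handles the second. This is also the ingredient \cref{prop:kappa-one} needs: with $\lambda=\mu$ the same argument gives the gap $\mu R^2/8$, which exceeds $\epsilon$ whenever $\epsilon\le\mu R^2/64$.
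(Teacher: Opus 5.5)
Your proof is correct and is essentially the paper's argument. Both answer every query with a direction-free isotropic value, then pick a unit vector orthogonal to all $T+1\le d-1$ queries and the output, and realize the transcript with an isotropic quadratic whose minimizer is shifted along that vector. The differences are cosmetic: you take $\lambda=\beta$ and place the minimizer at radius $R/2$ after subtracting a known constant, whereas the paper takes $\lambda=\max\{\mu,64\epsilon/R^2\}$ and radius $R/4$, and both choices give a terminal gap of at least $2\epsilon$.
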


\begin{proof}
Set
\[
\lambda:=\max\left\{\mu,\frac{64\epsilon}{R^2}\right\}.
\]
The accuracy assumption and $\mu\le\beta$ imply $\lambda\le\beta$.  Put $r=R/4$.  During the interaction answer every query $x_t$ by
\begin{equation}
\label{eq:quadratic-reply}
r_t:=\frac\lambda2\left(\norm{x_t}_2^2+r^2\right).
\end{equation}
After the $T\le d-2$ queries have been generated and the deterministic output $\widehat x$ is fixed, choose a unit vector
\[
v\perp\Span\{x_1,\ldots,x_T,\widehat x\}.
\]
Such a vector exists because the displayed span has dimension at most $T+1<d$.  Define
\[
F_v(x):=\frac\lambda2\norm{x-rv}_2^2.
\]
This function is $\lambda$-strongly convex and $\lambda$-smooth, hence it belongs to $\cF_{\mu,\beta}(d,R)$, and its minimizer $rv$ lies in $B_2^d(R/2)$.  Orthogonality gives $F_v(x_t)=r_t$ for every query, so one fixed objective realizes the transcript.  It also gives
\[
F_v(\widehat x)-F_v^\star
=\frac\lambda2\left(\norm{\widehat x}_2^2+r^2\right)
\ge\frac{\lambda R^2}{32}
\ge2\epsilon.
\]
\end{proof}

\begin{proof}[Proof of \cref{thm:lower-main}]
It is enough to prove the theorem with $c_\epsilon=1/64$.  Set
\[
M:=\min\left\{
\sqrt Q,
\sqrt\kappa,
D_d
\right\}.
\]
Let $c_0$ be the compiler constant in \cref{thm:compiler}.  Choose a universal $a>0$ so that
\begin{equation}
\label{eq:a-choice-sc}
a\le\frac14,
\qquad
a^3\le c_0,
\qquad
2^{18}a^2\le1.
\end{equation}
Increase $d_0$ so that
\[
aD_d\le\frac d4
\]
for all $d\ge d_0$.

First suppose $M\ge2/a$ and define
\[
m:=\lfloor aM\rfloor.
\]
Then
\begin{equation}
\label{eq:m-compare-sc}
\frac a2M\le m\le aM.
\end{equation}
Set
\begin{equation}
\label{eq:lambda-beta0}
\lambda:=\max\left\{\mu,\frac{64\epsilon}{R^2}\right\},
\qquad
\beta_0:=4096\lambda m^2.
\end{equation}
Since $m\le a\sqrt\kappa$ and $m\le a\sqrt Q$,
\begin{align*}
\frac{\beta_0}{\beta}
&=
4096\max\left\{\frac{m^2}{\kappa},\frac{64m^2}{Q}\right\}\\
&\le 2^{18}a^2\le1.
\end{align*}
Thus $\beta_0\le\beta$.  Moreover,
\[
m^3\log(ed)
\le a^3D_d^3\log(ed)
=a^3d
\le c_0d,
\qquad
m\le\frac d4.
\]
Apply \cref{thm:compiler} with auxiliary smoothness $\beta_0$.  The resulting fixed objective has smoothness below $\beta_0\le\beta$ and exact strong-convexity modulus
\[
\frac{\beta_0}{4096m^2}=\lambda\ge\mu.
\]
Hence it lies in $\cF_{\mu,\beta}(d,R)$.  Its terminal gap is at least
\[
2^{-17}\frac{\beta_0R^2}{m^2}
=
\frac{\lambda R^2}{32}
\ge2\epsilon.
\]
Therefore every budget $T<dm/8$ fails, and \eqref{eq:m-compare-sc} gives
\[
N_{\epsilon,\mathrm{det-ad}}^{\mathrm{sc,val}}
\ge\frac{dm}{8}
\ge\frac a{16}dM.
\]

Now suppose $M<2/a$.  By \cref{lem:quadratic},
\[
N_{\epsilon,\mathrm{det-ad}}^{\mathrm{sc,val}}
\ge d-1\ge\frac d2.
\]
Since $M<2/a$, this implies
\[
N_{\epsilon,\mathrm{det-ad}}^{\mathrm{sc,val}}
\ge\frac a4dM.
\]
Combining the two cases and taking $c=a/16$ proves \eqref{eq:lower-main}.
\end{proof}

\begin{proof}[Proof of \cref{thm:near-region}]
Condition \eqref{eq:near-condition} is exactly
\[
\sqrt{\min\{Q,\kappa\}}\le D_d.
\]
Therefore \cref{thm:lower-main} gives the lower inequality in \eqref{eq:near-main}.  For the upper inequality, if $Q\le\kappa$, use the direct branch of \cref{thm:upper-main}; then $\logp{Q/\kappa}=0$.  If $Q>\kappa$, use the restarted branch.  In either case the result is \eqref{eq:near-main}.
\end{proof}

\begin{proof}[Proof of \cref{cor:accuracy-branch}]
Here $\min\{Q,\kappa\}=Q$ and $\logp{Q/\kappa}=0$.  Apply \cref{thm:near-region}.
\end{proof}

\begin{proof}[Proof of \cref{cor:kappa-branch}]
Choose the universal $c_\epsilon$ no larger than the one in \cref{thm:lower-main} and no larger than $1$.  The condition $\epsilon\le c_\epsilon\mu R^2$ implies $Q\ge\kappa$, while the condition-number assumption implies $\kappa\le D_d^2$.  Thus \cref{thm:near-region} applies with $\min\{Q,\kappa\}=\kappa$ and
\[
\logp{Q/\kappa}
=\log\frac{\mu R^2}{\epsilon}.
\]
This proves \eqref{eq:kappa-branch}.
\end{proof}

\section{The resulting complexity landscape}
\label{sec:landscape}

The bounds can be summarized by the following table.  Logarithmic factors refer only to relative accuracy unless otherwise stated.

\begin{center}
\begin{tabular}{@{}p{0.26\textwidth}p{0.25\textwidth}p{0.25\textwidth}p{0.16\textwidth}@{}}
\toprule
Region & Lower bound & Upper bound & Status\\
\midrule
$Q\le\kappa$ and $Q\le D_d^2$
& $\Omega(d\sqrt Q)$
& $O(d\sqrt Q)$
& Optimal\\[1mm]
$\kappa\le Q$ and $\kappa\le D_d^2$
& $\Omega(d\sqrt\kappa)$
& $O(d\sqrt\kappa[1+\log(Q/\kappa)])$
& Near-optimal\\[1mm]
$\min\{Q,\kappa\}>D_d^2$
& $\Omega(dD_d)$
& \cref{thm:upper-main}
& Open\\
\bottomrule
\end{tabular}
\end{center}

\subsection{Why the high-accuracy result is not restricted to constant precision}

The lower bound $\Omega(d\sqrt\kappa)$ remains valid for every
\[
0<\epsilon\le c_\epsilon\mu R^2,
\]
not only for $\epsilon$ proportional to $\mu R^2$.  The upper bound pays one $O(d\sqrt\kappa)$ burn-in phase and then one such phase for each constant-factor error reduction.  Hence arbitrary high accuracy costs only the relative-accuracy logarithm
\[
1+\log\frac{\mu R^2}{\epsilon}.
\]
In the conventional notation that suppresses polylogarithmic factors, this is precisely a $\widetilde\Theta(d\sqrt\kappa)$ characterization.

\subsection{No unnecessary \texorpdfstring{$\log\kappa$}{log-kappa} term}

A direct application of a linear-convergence estimate from the initial gap $O(\beta R^2)$ can produce
\[
O\left(d\sqrt\kappa\log\frac{\beta R^2}{\epsilon}\right)
=
O\left(d\sqrt\kappa\left[\log\kappa+\log\frac{\mu R^2}{\epsilon}\right]\right).
\]
The burn-in argument removes this artifact.  A single accelerated convex phase reaches error $O(\mu R^2)$ in $O(d\sqrt\kappa)$ calls, after which restart depends only on $\mu R^2/\epsilon$.

\subsection{The dimension threshold}

The lower construction uses a robustness scale
\[
\rho=\Theta\left(\frac{R}{m^{3/2}}\right).
\]
The spherical-cap step requires $d\rho^2/R^2\gtrsim\log(d^2)$, leading to
\[
m^3\log(ed)\lesssim d.
\]
For the strongly convex branch $m\asymp\sqrt\kappa$, this becomes
\[
\kappa\lesssim\left(\frac{d}{\log(ed)}\right)^{2/3}.
\]
This threshold is a limitation of the present shielding geometry, not evidence that the true minimax complexity changes there.

\subsection{What remains open}

Four gaps are not closed by the present argument.
\begin{enumerate}[label=(\roman*)]
\item For nondegenerate condition numbers bounded away from $1$, the lower bound lacks the factor $\log(\mu R^2/\epsilon)$ in the high-accuracy strongly convex branch.  Closing this gap may require a multiscale no-reuse or direct-product mechanism beyond the single shielded chain.  At $\kappa=1$, however, \cref{prop:kappa-one} shows that no accuracy-dependent lower bound is possible.
\item When $\min\{Q,\kappa\}>D_d^2$, the lower bound saturates at $\Omega(d^{4/3}/\log^{1/3}(ed))$.
\item The resisting frame is constructed against one deterministic transcript and does not directly yield a randomized lower bound.
\item The cap-avoidance argument uses $\norm{x_t}\le R$ and therefore does not cover unrestricted queries.  The hard objective is $C^{1,1}$, not necessarily $C^2$ or $C^\infty$.
\end{enumerate}

\section{Conclusion}

For deterministic adaptive exact-value optimization of globally smooth strongly convex functions under bounded queries, the currently certified landscape is
\[
\Omega\left(
 d\min\left\{
 \sqrt{\frac{\beta R^2}{\epsilon}},
 \sqrt\kappa,
 \left(\frac{d}{\log(ed)}\right)^{1/3}
 \right\}
\right)
\]
versus
\[
O\left(
 d\min\left\{
 \sqrt{\frac{\beta R^2}{\epsilon}},
 \sqrt\kappa\left[1+\logp{\frac{\mu R^2}{\epsilon}}\right]
 \right\}
\right).
\]
Consequently, the accuracy-dominated branch is exactly characterized up to constants, while the full high-accuracy region with
\[
\kappa\le\left(\frac{d}{\log(ed)}\right)^{2/3}
\]
has near-optimal $\widetilde\Theta(d\sqrt\kappa)$ exact-value complexity.  This conclusion holds for arbitrary high accuracy and is not confined to a constant-precision slice. The research pipeline was carried out almost entirely by
\ResearchAgentSystem{}, which also conducted a Lean-based review of the
manuscript.

\appendix

\section{Auxiliary convex-analytic facts}
\label{app:convex}

For completeness, we record two facts used in the main proof.

\begin{lemma}[Projection and squared distance]
Let $C\subseteq\R^d$ be nonempty, closed, and convex.  Then
\[
D_C(x):=\frac12\dist(x,C)^2
\]
is convex and continuously differentiable with
\[
\nabla D_C(x)=x-\Pi_Cx.
\]
Moreover, $\nabla D_C$ is $1$-Lipschitz.
\end{lemma}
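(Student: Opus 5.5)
We prove the three claims (convexity, differentiability with the stated gradient, and $1$-Lipschitzness of the gradient) using only the standard properties of the Euclidean projection onto $C$. The basic tool is the variational characterization of $\Pi_Cx$: it is the unique point of $C$ with $\ip{x-\Pi_Cx}{c-\Pi_Cx}\le0$ for all $c\in C$. From it we derive firm nonexpansiveness,
\[
\norm{\Pi_Cx-\Pi_Cy}_2^2\le\ip{\Pi_Cx-\Pi_Cy}{x-y}.
\]
To get this, apply the characterization at $x$ with $c=\Pi_Cy$ and at $y$ with $c=\Pi_Cx$, then add the two inequalities. Everything else follows from this inequality and the characterization.

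\textbf{Gradient formula.} Fix $x$ and put $p=\Pi_Cx$. For the upper bound, use $\Pi_Cx$ as a feasible competitor for $x+v$:
\[
D_C(x+v)\le\tfrac12\norm{x+v-p}_2^2=D_C(x)+\ip{x-p}{v}+\tfrac12\norm v_2^2.
\]
For the lower bound, write $p'=\Pi_C(x+v)$. Expanding $\norm{x+v-p'}^2=\norm{(x-p)+(v+p-p')}^2$ and using $\ip{x-p}{p'-p}\le0$ from the characterization gives
\[
D_C(x+v)\ge D_C(x)+\ip{x-p}{v}.
\]
Hence $D_C(x+v)-D_C(x)-\ip{x-p}{v}$ lies in $[0,\tfrac12\norm v_2^2]$. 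This shows $D_C$ is Fréchet differentiable with $\nabla D_C(x)=x-\Pi_Cx$.

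\textbf{Convexity.} The lower bound just obtained, $D_C(y)\ge D_C(x)+\ip{\nabla D_C(x)}{y-x}$ for all $x,y$, is the first-order characterization of convexity. Alternatively, $\dist(\cdot,C)$ is a convex nonnegative function and squaring preserves convexity.

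\textbf{Lipschitz gradient.} With $G=I-\Pi_C$, expand
\[
\norm{Gx-Gy}^2=\norm{x-y}^2-2\ip{x-y}{\Pi_Cx-\Pi_Cy}+\norm{\Pi_Cx-\Pi_Cy}^2.
\]
Firm nonexpansiveness bounds the last two terms together by $-\norm{\Pi_Cx-\Pi_Cy}^2\le0$. So $\norm{Gx-Gy}\le\norm{x-y}$, and continuity of the gradient follows as well.

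The only step needing care is the lower bound in the gradient formula, where the characterization must be applied at the correct point. The rest is routine algebra.
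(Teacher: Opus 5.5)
Your proof is correct and follows the paper's route. Firm nonexpansiveness of $\Pi_C$, expanded for $I-\Pi_C$, gives the $1$-Lipschitz bound exactly as in the paper. Your two-sided estimate $0\le D_C(x+v)-D_C(x)-\ip{x-\Pi_Cx}{v}\le\frac12\norm{v}_2^2$ spells out the ``direct calculation'' that the paper only names (alongside Danskin's theorem). Obtaining convexity from the resulting gradient inequality, rather than from monotonicity of $I-\Pi_C$ as the paper does, is an inessential variation.
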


\begin{proof}
The projection $\Pi_C$ is firmly nonexpansive:
\[
\norm{\Pi_Cx-\Pi_Cy}^2
\le\ip{\Pi_Cx-\Pi_Cy}{x-y}.
\]
Expanding the norm of $(I-\Pi_C)x-(I-\Pi_C)y$ shows that $I-\Pi_C$ is also firmly nonexpansive and hence nonexpansive.  Danskin's theorem, or a direct directional-derivative calculation using uniqueness of the Euclidean projection, gives the gradient formula.  Convexity follows from monotonicity of the gradient.
\end{proof}

\begin{lemma}[Chi-square lower tail]
If $g\sim N(0,I_p)$, then
\[
\Prob\left\{\norm g_2^2\le\frac p2\right\}\le e^{-p/16}.
\]
\end{lemma}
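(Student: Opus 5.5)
The plan is a direct Chernoff (exponential-moment) bound for the lower tail of $\norm g_2^2=\sum_{i=1}^p g_i^2$. For any $\lambda>0$, Markov's inequality applied to $e^{-\lambda\norm g_2^2}$ gives
\[
\Prob\left\{\norm g_2^2\le\frac p2\right\}
\le e^{\lambda p/2}\,\E e^{-\lambda\norm g_2^2}
=e^{\lambda p/2}(1+2\lambda)^{-p/2},
\]
where the last equality uses independence of the coordinates and the Gaussian integral $\E e^{-\lambda g_1^2}=(1+2\lambda)^{-1/2}$. Thus the bound takes the form $\exp\bigl(\tfrac p2[\lambda-\log(1+2\lambda)]\bigr)$.

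Next I will choose $\lambda$. The exact optimizer is $\lambda=1/2$, which makes the exponent $\tfrac p2(\tfrac12-\log 2)\approx-0.0966\,p$. Since $-0.0966<-1/16=-0.0625$, this already yields the claim with room to spare. The only thing to record is the numerical inequality $\log 2>\tfrac12+\tfrac18=0.625$, which holds because $\log 2>0.69$.

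There is no real obstacle here. The only care needed is the Gaussian moment computation: $\E e^{-\lambda g^2}=\int (2\pi)^{-1/2}e^{-(1+2\lambda)t^2/2}\,dt=(1+2\lambda)^{-1/2}$, valid for all $\lambda>-1/2$ and in particular for $\lambda=1/2$. After that the argument is purely arithmetic. If a cleaner constant is wanted, one can instead use $\log(1+2\lambda)\ge 2\lambda-2\lambda^2$ and take $\lambda=1/4$, giving an exponent of $\tfrac p2(-\tfrac14+\tfrac18)=-p/16$ exactly, which matches the stated bound.
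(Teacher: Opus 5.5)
Your proposal is correct and matches the paper's own proof: Markov's inequality applied to $e^{-t\norm g_2^2}$, the moment identity $\E e^{-t\norm g_2^2}=(1+2t)^{-p/2}$, and the choice $t=1/2$. This gives the exponent $p/4-(p/2)\log 2\le -p/16$, which is exactly your condition $\log 2\ge 5/8$.
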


\begin{proof}
For any $t>0$, Markov's inequality gives
\[
\Prob\left\{e^{-t\norm g^2}\ge e^{-tp/2}\right\}
\le e^{tp/2}\E e^{-t\norm g^2}
=e^{tp/2}(1+2t)^{-p/2}.
\]
Taking $t=1/2$ yields
\[
\exp\left(\frac p4-\frac p2\log2\right)\le e^{-p/16}.
\]
\end{proof}

\section{A compact pseudocode description of the compiler}
\label{app:pseudocode}

\begin{algorithm}[H]
\DontPrintSemicolon
\KwIn{A fixed deterministic $T$-call algorithm; admissible $d,m,R,\beta_0$; block size $b=\lfloor d/4\rfloor$.}
\KwOut{A transcript, an orthonormal frame $U$, and one fixed hard objective $F_U$.}
Set $q=\lfloor T/b\rfloor$ and $s=T-qb$\;
\For{$j=1,\ldots,q$}{
  Answer the next $b$ adaptive queries using \eqref{eq:fullresponse}\;
  After the block is fixed, choose $u_j$ orthogonal to the block and cap-avoiding all earlier queries\;
}
Answer the final $s$ adaptive queries using \eqref{eq:lastresponse}\;
Compute the deterministic output $\widehat x$ from the completed transcript\;
Choose $u_{q+1}$ orthogonal to the incomplete block and $\widehat x$, and cap-avoiding all complete-block queries\;
Complete $u_{q+2},\ldots,u_m$ by cap avoidance on all queries and $\widehat x$\;
Define $F_U$ by \eqref{eq:FU}\;
\caption{Batched delayed-rotation fixed-objective lower-bound compiler.}
\end{algorithm}

\section{Constant bookkeeping}
\label{app:constants}

The lower proof keeps universal constants deliberately conservative.  One may take the compiler constant $c_0$ from \cref{cor:capparameter}; then choose
\[
a\le\min\left\{\frac14,c_0^{1/3},2^{-9}\right\}.
\]
After increasing $d_0$, the proof of \cref{thm:lower-main} is valid with
\[
c_\epsilon=\frac1{64},
\qquad
c=\frac a{16}.
\]
No effort is made to optimize these constants.  The upper proof uses $K=\lceil8\sqrt\kappa\rceil$ and admits a universal numerical constant in \eqref{eq:upper-main}.

\end{document}